\documentclass[11pt]{amsart}
\hoffset         -0.5in
\voffset          -0.3in
\textwidth       5.9in
\textheight      8.8in

\usepackage{amsmath,amsfonts,amssymb,amsthm}
\usepackage{latexsym,bm,graphicx}
\usepackage{mathrsfs}
\usepackage{color}

\title[Sharp spectral gap and Li--Yau's estimate]{Sharp Spectral Gap and Li--Yau's Estimate on Alexandrov Spaces}
\author{Zhongmin Qian}
\address{Mathematical Institute\\University of Oxford\\ 24--29 St Giles, OX1 3LB, Oxford, United Kingdom\\ E-mail address: qianz@maths.ox.ac.uk}
\author{Hui-Chun Zhang}
\address{Department of Mathematics\\  Sun Yat-sen University\\ Guangzhou 510275\\ E-mail address: zhhuich@mail2.sysu.edu.cn}
\author{Xi-Ping Zhu}
\address{Department of Mathematics\\  Sun Yat-sen University\\ Guangzhou 510275\\ E-mail address: stszxp@mail.sysu.edu.cn}

\newtheorem{thm}{Theorem}[section]
\newtheorem{prop}[thm]{Proposition}
\newtheorem{lem}[thm]{Lemma}

\newtheorem{cor}[thm]{Corollary}

\theoremstyle{definition}
\theoremstyle{remark}

\newtheorem{rem}[thm]{Remark}

\numberwithin{equation}{section}

\newcommand{\ls}{\leqslant}
\newcommand{\gs}{\geqslant}
\newcommand{\wa}{\widetilde\angle}

\newcommand{\ip}[2]{\left<{#1},{#2}\right>}
\newcommand{\rv}{{\rm vol}}

\newcommand{\R}{\mathbb{R}}
\newcommand{\M}{\mathbb{M}}

\begin{document}

\maketitle
\begin{abstract}In the previous work \cite{zz10-3}, the second and third authors established a Bochner type formula on Alexandrov spaces.
 The purpose of this paper is to give some applications of the Bochner type formula.
 Firstly, we extend the sharp  lower bound  estimates of spectral gap, due to  Chen--Wang \cite{cw94,cw97} and  Bakry--Qian \cite{bq00},
  from smooth Riemannian manifolds to Alexandrov spaces. As an application,  we get an Obata type theorem for Alexandrov spaces. Secondly,
  we obtain (sharp) Li--Yau's estimate for positve solutions of heat equations on Alexandrov spaces.
\end{abstract}
\section{Introduction}
Let $n\gs2$ and  $M$  a compact $n$-dimensional Alexandrov space without boundary. It is well known that the first non-zero eigenvalue of
 the (canonical) Laplacian is given by:
$$\lambda_1(M):=\inf\Big\{\frac{\int_M|\nabla f|^2d\rv}{\int_Mf^2d\rv}:\ f\in Lip(M)\backslash\{0\}\ {\rm and}\ \int_Mfd\rv=0\Big\},$$
where $Lip(M)$ is the set of Lipschitz functions on $M$.

When $M$ is a smooth compact Riemannian manifold, the study of the lower bound estimate of first non-zero eigenvalue $\lambda_1(M)$ has a long history,
 see for example Lichnerowicz \cite{l58}, Cheeger \cite{c70}, Li--Yau \cite{ly79}, and so on. For an overview the reader is referred to the introduction
 of \cite{bq00}, \cite{b04,l04} and Chapter 3 in book \cite{sy94},  and references therein.

Let $M^n$ be a compact $n$-dimensional Riemannian manifold without boundary. Lichnerowicz's estimate asserts that $\lambda_1(M^n)\gs n$ if Ricci curvature
 of the manifold $M^n$ is bounded below by $n-1$. Later Obata \cite{ob62} proved that the equality holds if and only if the manifold $M^n$ is isometric to
  $\mathbb S^n$ with the standard metric. Zhong--Yang's estimate \cite{zy84} asserts that $\lambda_1(M^n)\gs \pi^2/{\rm diam}^2(M^n)$ if $M^n$ has nonnegative
   Ricci curvature. The statement is optimal. In \cite{hw07}, Hang--Wang proved that if the equality holds, then $M^n$ must be  isometric to the circle of
    radius ${\rm diam}(M^n)/\pi.$ Chen--Wang in \cite{cw94,cw97} and Bakry--Qian in \cite{bq00}   put these two lower bound estimates in a same framework,
    which is the following comparison theorem:
\begin{thm}\label{thm1.1} {\rm(Chen--Wang \cite{cw94,cw97}, Bakry--Qian \cite{bq00})}\indent Let $M^n$ be a compact Riemannian manifold of dimension
 $n$ $($with a convex boundary or without boundary$)$ and $Ric(M^n)\gs (n-1)K$. Then the first non-zero $($Neumann$)$ eigenvalue satisfies
  $$\lambda_1(M^n)\gs \lambda_1(K,n,d),$$
where $d$ is the diameter of $M^n$, $\lambda_1(K,n,d)$ denotes the first non-zero Neumann eigenvalue of the following one-dimensional model:
$$v''(x)-(n-1)T(x) v'(x)=-\lambda v(x) \quad x\in (-\frac{d}{2},\frac{d}{2}),\qquad v'(-\frac{d}{2})=v'(\frac{d}{2})=0$$
and
\begin{equation*}T(x)=\begin{cases}\sqrt{K}\tan(\sqrt Kx)& {\rm if} \ K\gs0,\\ -\sqrt{-K}\tanh(\sqrt{-K}x)& {\rm if} \ K<0.\end{cases}
\end{equation*}
\end{thm}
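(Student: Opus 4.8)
The plan is to reduce the inequality to a one-dimensional comparison for a first (Neumann) eigenfunction $u$ of $M^n$, of Li--Yau/Zhong--Yang/Kr\"oger type. Concretely, I would try to produce a profile $\Psi$, positive on the interior of the range of $u$, with the sharp gradient estimate $|\nabla u|\leq\Psi(u)$ on $M^n$; then, choosing $p,q\in M^n$ with $u(p)=\max u$, $u(q)=\min u$ and integrating this estimate along a minimizing geodesic $\gamma$ from $p$ to $q$, one gets
$$\int_{\min u}^{\max u}\frac{ds}{\Psi(s)}\ \leq\ \mathrm{length}(\gamma)\ =\ d(p,q)\ \leq\ d .$$
The remaining task is to choose $\Psi$ as sharply as possible. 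The ``right'' $\Psi$ is (up to reparametrisation) the derivative of the one-dimensional model eigenfunction $v$ solving $v''-(n-1)Tv'=-\lambda v$ from the statement, with $\lambda=\lambda_1(M^n)$; for this choice the left-hand integral is exactly the length of the interval on which $v$ realises the range of $u$, the threshold eigenvalue $\lambda_1(K,n,d)$ is characterised by that length being $d$, and the first nonzero Neumann eigenvalue of the model operator on an interval is strictly decreasing in the interval's length. So a sufficiently sharp $\Psi$ forces $\lambda_1(M^n)\geq\lambda_1(K,n,d)$.

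For the gradient estimate I would argue by the maximum principle. Writing $\Delta u=-\lambda_1 u$ with $\partial u/\partial\nu=0$ on $\partial M^n$, set $P:=|\nabla u|^2-\Psi(u)^2$ and suppose $P$ attains a positive maximum. Using Bochner's formula
$$\tfrac12\Delta|\nabla u|^2=|\mathrm{Hess}\,u|^2+\langle\nabla u,\nabla\Delta u\rangle+\mathrm{Ric}(\nabla u,\nabla u),$$
the hypothesis $\mathrm{Ric}\geq(n-1)K$, the equation $\Delta u=-\lambda_1 u$, and the refined Cauchy--Schwarz bound for $|\mathrm{Hess}\,u|^2$ that separates the $\nabla u$-direction from its orthogonal complement, one computes at a would-be interior maximum an inequality $\Delta P\geq\langle X,\nabla P\rangle$, provided $\Psi$ satisfies the Riccati/ODE inequality built from $T$; with the right $\Psi$ this forces $P\leq 0$. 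The point is that $-(n-1)T$ is precisely the logarithmic derivative of the volume density of the space form of curvature $K$ (in Fermi coordinates about a totally geodesic hypersurface), so the Bochner computation and the model ODE interlock exactly, with the case distinction $K\geq 0$ versus $K<0$ entering only through the explicit $T$. A maximum of $P$ on the critical set of $u$ is handled directly; a boundary maximum is ruled out by Hopf's lemma together with convexity, since with the Neumann condition $\tfrac12\,\partial_\nu|\nabla u|^2=-\mathrm{II}(\nabla u,\nabla u)\leq 0$.

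The step I expect to be the main obstacle is the choice of $\Psi$ when the range of $u$ is \emph{not} symmetric, i.e. $\min u\neq-\max u$ after any normalisation. The naive choice ($\Psi=v'\circ v^{-1}$ with $v$ the model eigenfunction, rescaled so that a monotone branch covers the range of $u$) does satisfy the ODE inequality, but the resulting integral $\int ds/\Psi(s)$ degrades as the range grows more asymmetric, yielding only a Li--Yau type bound rather than the sharp one. To recover sharpness one must enlarge the model: add a carefully tuned \emph{odd} correction term whose coefficient is fixed by the amount of asymmetry (the Zhong--Yang device for $K=0$, extended to general $K$ by Bakry--Qian), re-verify the differential inequality for this modified $\Psi$, and then check that the improved estimate $\int ds/\Psi(s)\leq d$ still yields $\lambda_1(M^n)\geq\lambda_1(K,n,d)$, the gain from the correction term exactly compensating the loss from asymmetry. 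Making this one-dimensional optimisation work uniformly in the asymmetry parameter and in $K$ is the heart of the matter; the Bochner/maximum-principle part above is comparatively routine once $\Psi$ is in hand. (One could instead bypass the gradient estimate: symmetric rearrangement of $u$ against the model via the coarea formula and a L\'evy--Gromov type isoperimetric inequality, in the style of Kr\"oger, or probabilistic coupling of Brownian motions, in the style of Chen--Wang; I would still expect the sharp one-dimensional comparison to be the crux.)
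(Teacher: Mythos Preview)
Theorem~1.1 is not proved in this paper; it is quoted from Chen--Wang and Bakry--Qian as background for the paper's own Theorem~1.2. What the paper does, however, is summarise the structure of Bakry--Qian's analytic proof (in the Introduction) and then reproduce that structure in the Alexandrov setting (Sections~3--4), so that is the relevant comparison.

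Your gradient-comparison step via Bochner and the maximum principle, with $\Psi=v'\circ v^{-1}$, is exactly Part~1 of the Bakry--Qian scheme (Kr\"oger's theorem; the paper's Theorem~3.2), and the integration along a geodesic to bound the model interval by $d$ is also shared. The divergence is in how the asymmetry of the range of $u$ is dealt with. You propose to perturb $\Psi$ by a Zhong--Yang-type odd correction tuned to the asymmetry. Bakry--Qian do something structurally different: they prove a \emph{maximum comparison} (the paper's Theorem~3.3), namely that after normalising $\min f=-1$, $\max f\le 1$ one has $\max f\ge m_{R,n}$. This guarantees the existence of an interval $[a,b]$ on which the model operator $L_{R,n}$ has first Neumann eigenvalue exactly $\lambda_1$ with an eigenfunction matching the range of $f$; the gradient comparison then gives $b-a\le d$, and a separate, purely one-dimensional result (Theorem~13 of \cite{bq00}) says that among intervals of fixed length the \emph{symmetric} one minimises the first Neumann eigenvalue of $L_{R,n}$, whence $\lambda_1\ge\lambda_1(K,n,b-a)\ge\lambda_1(K,n,d)$.

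So your parenthetical ``the Zhong--Yang device for $K=0$, extended to general $K$ by Bakry--Qian'' mischaracterises their argument as the paper presents it: the asymmetry is not absorbed by modifying $\Psi$, but by (i) a maximum-comparison theorem constraining the range of $f$, and (ii) a one-dimensional symmetrisation lemma for the model. Your route is legitimate for $K=0$ (that is literally Zhong--Yang), and you correctly flag that pushing the correction-term calculus through uniformly in $K$ and in the asymmetry parameter is the unresolved crux of your outline. Bakry--Qian sidestep precisely that difficulty, at the price of the extra maximum-comparison step and a nontrivial ODE analysis; the paper's contribution is to make both of those steps survive the loss of smooth maximum principles and Hessian bounds on Alexandrov spaces.
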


In \cite{petu03}, Petrunin extended the  Lichnerowicz's estimate to Alexandrov spaces with curvature $\gs1.$ More generally, in \cite{lv07},
 Lott--Villani extended Lichnerowicz's estimate to a metric measure space with
  $CD(n,(n-1))$\footnote{This is a generalized notion of Ricci curvature bounded below by $n-1$ on metric measure spaces.
   We refer the reader to a survey \cite{zz10-2} for others generalizations of the lower bounds of Ricci curvature on singular spaces, in particular on Alexandrov spaces.}.
    In particular, Lichnerowicz's estimate holds on  an $n$-dimensional Alexandrov space $M$ with $Ric(M)\gs n-1$.
    This was also proved  by the second and third named authors  in \cite{zz10-3} via a different method.

 For simplicity, we always assume that the Alexandrov space $M$ has empty boundary.  Our first result in this paper is an extension of  the above
  comparison result (Theorem \ref{thm1.1}) on Alexandrov spaces. Explicitly, we will prove the following:
\begin{thm}\label{thm1.2}  Let $M$ be a compact $n$-dimensional Alexandrov space  without boundary  and $Ric(M)\gs (n-1)K$. Then its the first non-zero eigenvalue satisfies
 $$\lambda_1(M)\gs \lambda_1(K,n,d),$$
where $d$ is the diameter of $M$ and  $\lambda_1(K,n,d)$ as above in Theorem \ref{thm1.1}.
\end{thm}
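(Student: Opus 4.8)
The strategy is to transplant the gradient-comparison argument of Zhong--Yang \cite{zy84} and Bakry--Qian \cite{bq00} to the singular setting, the Bochner-type formula of \cite{zz10-3} replacing the classical Bochner identity. Let $u$ be a first eigenfunction, so that $\int_M u\,d\rv=0$ and $\Delta u=-\lambda_1 u$ with $\lambda_1:=\lambda_1(M)$; recall that $u$ is Lipschitz on $M$ and, by \cite{zz10-3}, enjoys the Sobolev-type regularity that makes the Bochner inequality available: as measures on $M$,
\begin{equation*}
\tfrac12\Delta|\nabla u|^2\ \gs\ \tfrac1n(\Delta u)^2+\langle\nabla u,\nabla\Delta u\rangle+K|\nabla u|^2=\tfrac{\lambda_1^2}{n}u^2+(K-\lambda_1)|\nabla u|^2 ,
\end{equation*}
and in fact its refined form carrying the extra non-negative term $\tfrac{n}{n-1}\big(\tfrac{\langle\nabla|\nabla u|^2,\nabla u\rangle}{2|\nabla u|^2}-\tfrac{\Delta u}{n}\big)^2$, which is what makes the \emph{sharp} constant attainable. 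After replacing $u$ by $-u$ if necessary and rescaling, we normalize $\max_M u=1$ and set $k:=-\min_M u\in(0,1]$ (note $k>0$ because $\int_M u\,d\rv=0$).

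To $\lambda_1$ and $k$ we attach the one-dimensional model function $\varphi=\varphi_{\lambda_1,k}\gs 0$ on $[-k,1]$, built from the model equation of Theorem \ref{thm1.1} exactly as in \cite{bq00}: $\varphi(1)=0$, $\varphi$ is smooth on $(-k,1)$, and $\varphi$ satisfies the second-order differential identity obtained by differentiating $v''-(n-1)Tv'=-\lambda_1v$, with Zhong--Yang's correction incorporated when $k<1$. We use only two facts about $\varphi$: (i) this differential identity, and (ii) the purely one-dimensional comparison $\int_{-k}^{1}\varphi(t)^{-1/2}\,dt\gs\sqrt{\lambda_1}\,d_{\lambda_1}$ for every $k\in(0,1]$, where $d_{\lambda_1}$ is determined by $\lambda_1(K,n,d_{\lambda_1})=\lambda_1$; both are established in \cite{bq00} and need no modification. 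Granting the pointwise gradient comparison
\begin{equation}\label{eq-gradcomp}
|\nabla u|^2\ \ls\ \lambda_1\varphi(u)\qquad\text{on }M ,
\end{equation}
the theorem follows: pick $x_{\min},x_{\max}\in M$ with $u(x_{\min})=-k$, $u(x_{\max})=1$ and a minimal geodesic $\gamma$ joining them (minimal geodesics always exist on Alexandrov spaces), of length $\ell\ls d$; since $|\nabla u|$ is an upper gradient of the Lipschitz function $u$, \eqref{eq-gradcomp} gives $\big|\tfrac{d}{dt}\Phi(u\circ\gamma)\big|\ls1$ almost everywhere, where $\Phi'=(\lambda_1\varphi)^{-1/2}$, so integrating along $\gamma$ and using (ii),
\begin{equation*}
d\ \gs\ \ell\ \gs\ \int_{-k}^{1}\big(\lambda_1\varphi(t)\big)^{-1/2}\,dt\ \gs\ d_{\lambda_1},
\end{equation*}
whence $\lambda_1=\lambda_1(K,n,d_{\lambda_1})\gs\lambda_1(K,n,d)$ by the monotonicity of $d\mapsto\lambda_1(K,n,d)$.

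It remains to prove \eqref{eq-gradcomp}, which is the heart of the matter. Suppose it fails; then $F:=|\nabla u|^2-\lambda_1\varphi(u)$ has $\sup_M F=:m>0$, attained at some $x_0$. At a point where $u$ is extremal the gradient vanishes and $\varphi(u)\gs0$ (indeed $\varphi(1)=0$), so $F\ls0$ there; hence $u(x_0)\in(-k,1)$, $\nabla u(x_0)\ne0$, and $\varphi$ together with its first two derivatives is bounded on a neighbourhood of $u(x_0)$, hence near $x_0$ along $M$. By the chain rule for the measure-valued Laplacian, $\Delta(\varphi(u))=-\lambda_1u\,\varphi'(u)+\varphi''(u)|\nabla u|^2$; feeding this, the refined Bochner inequality, and the differential identity of $\varphi$ into $\Delta F$ --- and using that $\nabla F(x_0)=0$ gives $\tfrac{\langle\nabla|\nabla u|^2,\nabla u\rangle}{2|\nabla u|^2}=\tfrac{\lambda_1}{2}\varphi'(u)$ at $x_0$, so that the refined term matches its model value --- one obtains that $F$ satisfies the differential inequality $\Delta F\gs c(x)\,F$ near $x_0$ with $c$ bounded. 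The strong maximum principle then forces $F\equiv m$ on a neighbourhood of $x_0$; running the same argument at every point of $\{F=m\}$ (each lies over the interior of the range of $u$, by the first remark above) shows $\{F=m\}$ is open and closed, so $F\equiv m$ on $M$. But then $|\nabla u|^2=m+\lambda_1\varphi(u)>0$ everywhere, which is impossible since $u$, being non-constant and continuous on the compact space $M$, attains its maximum and hence is critical there. This contradiction proves \eqref{eq-gradcomp}.

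The step that requires genuine work in the Alexandrov setting is this last one: $|\nabla u|^2$ is a priori defined only almost everywhere and $\Delta$ is a measure, so the maximum point $x_0$, the pointwise Bochner inequality, and the maximum principle must all be made sense of --- by working on the regular part $M^{\mathrm{reg}}$, where the canonical Laplacian is a genuine (low-regularity) divergence-form operator, controlling the small singular set through the capacity and measure estimates underlying \cite{zz10-3}, and invoking (or establishing) a strong maximum principle for $\Delta v\gs c\,v$ with bounded $c$ in that framework. Alternatively, one may run the classical scheme of \cite{zy84,bq00} with $\lambda_1$ in the model replaced by $\lambda_1-\varepsilon$, turning the inequality for $F$ into a strict one and invoking only the weak maximum principle before letting $\varepsilon\downarrow0$. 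The remaining ingredients --- existence and Lipschitz regularity of eigenfunctions, the chain rule for the canonical Laplacian, existence of minimal geodesics, and the upper-gradient property of $|\nabla u|$ --- are standard on Alexandrov spaces, and together with the Bochner formula they are precisely what \cite{zz10-3} provides.
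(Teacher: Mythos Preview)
Your overall architecture is right --- Bochner-type formula, gradient comparison with a one-dimensional model, then integrate along a geodesic --- and this is exactly the Bakry--Qian framework the paper follows. But the core analytic step, the proof of \eqref{eq-gradcomp}, is not actually carried out, and the method you sketch is precisely the one that breaks down on Alexandrov spaces.

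Your argument takes a point $x_0$ where $F=|\nabla u|^2-\lambda_1\varphi(u)$ attains its positive maximum, uses $\nabla F(x_0)=0$ to simplify the refined Bochner term, obtains $\Delta F\gs c\,F$ nearby, and invokes a strong maximum principle. Each of these is problematic here: $|\nabla u|^2$ is only known to be lower semi-continuous (Lemma \ref{lem3.1}), so $F$ is lower semi-continuous and its supremum need not be attained; even if it were, a pointwise first-order condition like $\nabla F(x_0)=0$ has no a priori meaning for a merely $W^{1,2}$ function; and a strong maximum principle for the measure-valued Laplacian in this regularity class is not available off the shelf. You concede all of this in your last paragraph, but the remedies you propose --- ``work on $M^{\mathrm{reg}}$ and control the singular set by capacity'', or ``perturb by $\varepsilon$ and use only a weak maximum principle'' --- are slogans, not proofs; in particular, on a compact space without boundary a ``weak maximum principle'' in the form you need is essentially the strong one. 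The paper avoids all of this by replacing the pointwise maximum-principle step with an integral/level-set argument of De Giorgi type: from $\mathscr L_F\gs -c|\nabla F|\cdot\rv$ one tests with $(F-k)^+$, uses the Sobolev inequality, and derives a uniform positive lower bound for $\rv(\{F\gs k\}\cap\mathrm{supp}|\nabla F|)$ as $k\uparrow\sup F$, forcing a contradiction. That is the genuinely new idea in the singular setting, and it is missing from your write-up.

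There is a second omission. Your construction of $\varphi_{\lambda_1,k}$ and the one-dimensional inequality (ii) implicitly require that the range of $u$ sit inside the range of a model Neumann eigenfunction with the same eigenvalue --- equivalently, a lower bound on $\max u$ in terms of the model (Theorem \ref{th3.3} here). In the smooth case Bakry--Qian obtain this from boundedness of $\mathrm{Hess}\,u$; on Alexandrov spaces no such Hessian bound is available, and the paper instead uses the mean-value inequality for super-solutions (Proposition \ref{pr2.2}) together with Bishop--Gromov to control $\rv(\{f\ls -1+Cr^2\})$ near the minimum point and force $l\ls n$. You cite \cite{bq00} for this step, but their argument does not transfer; this is a second place where the paper supplies a substitute you have not.
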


As a consequence, by combining with the maximal diameter theorem in \cite{zz10-1}, we obtain an Obata type theorem (see \cite{S} for the case of orbifolds).
\begin{cor}\label{cor1.3} Let $M$ be a compact $n$-dimensional Alexandrov space  without boundary  and $Ric(M)\gs (n-1)$. If $\lambda_1(M)=n$,
then $M$ is isometric to a spherical suspension over an $(n-1)$-dimensional Alexandrov space  with curvature $\gs1.$\end{cor}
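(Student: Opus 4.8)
The plan is to derive Corollary~\ref{cor1.3} by combining the sharp gap estimate of Theorem~\ref{thm1.2} (in the case $K=1$) with the maximal diameter theorem of \cite{zz10-1}; once these two inputs are granted, the argument is almost formal, and amounts to extracting diameter rigidity from the equality case of Theorem~\ref{thm1.2}. First I would record three elementary facts about the one-dimensional model with $K=1$ on $(-d/2,d/2)$. (i)~When $d=\pi$ the function $v(x)=\sin x$ solves $v''-(n-1)\tan x\, v'=-nv$, satisfies $v'(\pm\pi/2)=0$, and changes sign exactly once; hence $\lambda_1(1,n,\pi)=n$. (ii)~By the Bonnet--Myers type theorem on Alexandrov spaces, $Ric(M)\gs n-1$ forces $d:={\rm diam}(M)\ls\pi$. (iii)~The function $d\mapsto\lambda_1(1,n,d)$ is strictly decreasing on $(0,\pi]$, as shown in \cite{cw94,cw97,bq00}; so $\lambda_1(1,n,d)>n$ for every $d<\pi$. (One can also see (iii) directly: on a symmetric interval the first nonzero Neumann eigenfunction of the regular Sturm--Liouville model is simple and odd, the only odd solution of the model equation with $\lambda=n$ is proportional to $\sin x$, and $\sin x$ fails the Neumann condition at $\pm d/2$ when $d<\pi$ since $\cos(d/2)\neq0$.)

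Next, assume $\lambda_1(M)=n$. Applying Theorem~\ref{thm1.2} with $K=1$ and using (i)--(iii),
$$n=\lambda_1(M)\gs\lambda_1(1,n,d)\gs\lambda_1(1,n,\pi)=n,$$
so all the inequalities are equalities; in particular $\lambda_1(1,n,d)=\lambda_1(1,n,\pi)$, and the strict monotonicity in (iii) forces $d=\pi$. Thus $M$ is a compact $n$-dimensional Alexandrov space without boundary, with $Ric(M)\gs n-1$, whose diameter attains the maximal value $\pi$.

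Finally I would invoke the maximal diameter theorem of \cite{zz10-1}: under exactly these hypotheses $M$ is isometric to the spherical suspension over an $(n-1)$-dimensional Alexandrov space of curvature $\gs1$, which is the desired conclusion. The only step requiring care is the passage from $\lambda_1(M)=n$ to ${\rm diam}(M)=\pi$, i.e.\ the strict monotonicity of the scalar model $\lambda_1(1,n,\cdot)$ near $d=\pi$; this is classical, and the genuinely hard analytic work has already been done in Theorem~\ref{thm1.2} and in \cite{zz10-1}. Hence the corollary is essentially a repackaging of the sharp spectral gap together with diameter rigidity, and I do not expect any serious obstacle beyond correctly citing these two results under their stated hypotheses.
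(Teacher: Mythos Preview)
Your proposal is correct and follows essentially the same strategy as the paper: deduce from $\lambda_1(M)=n$ that ${\rm diam}(M)=\pi$, and then invoke the maximal diameter theorem from \cite{zz10-1}. The only difference is in the mechanism for forcing $d=\pi$: the paper (see Corollary~\ref{cor4.2}) appeals to the explicit Chen--Wang bound $\lambda_1(M)\gs n/(1-\cos^n(d/2))$ from Corollary~\ref{cor4.1}(1), which immediately gives $\cos(d/2)=0$ when $\lambda_1(M)=n$, whereas you argue via the strict monotonicity of $d\mapsto\lambda_1(1,n,d)$ together with the identification $\lambda_1(1,n,\pi)=n$. Both routes are short; yours is slightly more conceptual, the paper's slightly more elementary in that it avoids invoking the monotonicity statement from \cite{bq00}.
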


There are two different approaches  to prove Theorem \ref{thm1.1}.  One is a probabilistic  way:  Chen--Wang \cite{cw94,cw97} used the Kendall--Cranston coupling
method to prove Theorem \ref{thm1.1}. This way does not work directly on Alexandrov spaces, since it is not clear how to construct Brownian motions and how to
define SDE on Alexandrov spaces.   The other is an analytic way, given by Bakry and the first author   in \cite{bq00}. The latter approach consists of three parts.
In the first part, by combining  Bochner's formula and a smooth maximum principle argument, Kr\"oger in \cite{k92} obtained a comparison theorem for the gradient
of the eigenfunctions, which was also proved by Bakry--Qian in \cite{bq00} for   general differential operators $L$ with curvature-dimension condition $CD(n,R)$.
Secondly, by using the comparison result on the gradient of eigenfunctions  and the boundness of Hessian of eigenfunctions, Bakry--Qian proved a comparison theorem
for the maximum of eigenfunctions. In the last part, Bakry--Qian \cite{bq00} developed  a deep analysis on the one-dimensional models to prove  Theorem \ref{thm1.1}.
For Alexandrov spaces with Ricci curvature bounded below, a Bochner type formula has been established by the second and third authors in \cite{zz10-3}. Our proof of
Theorem \ref{thm1.2} is basically along the line of Bakry--Qian's proof in \cite{bq00}. However, we must overcome the difficulties bringing in due to
lacking of a smooth maximum principle and the boundedness of Hessian of eigenfunctions on Alexandrov spaces. To overcome the first difficulty,
we will replace the smooth maximum principle argument by a method of upper bound estimate for weak solutions of elliptic equations.
To overcome the second difficulty, we will appeal to a mean value inequality of Poisson equations in \cite{zz10-3}.

After we completed this paper (which was posted on Arxiv in Feb. 2011), we noted that Andrews-Clutterbbuck in \cite{ac12} provided a heat equation proof for the above Theorem \ref{thm1.1}. Recently, the sharp estimate has been also extended to Finsler manifolds by Wang-Xia in \cite{wx11}.

The second purpose of this paper is to extend Li--Yau's parabolic estimates from smooth Riemannian manifolds to Alexandrov spaces.
\begin{thm}\label{thm1.4}
 Let $M$ be a compact  $n$-dimensional Alexandrov space with nonnegative Ricci curvature and $\partial M=\varnothing$.
 Assume that $u(x,t)$ is a positive solution of heat equation $\frac{\partial}{\partial t}u=\Delta u $ on $M\times [0,\infty)$. Then we have
\begin{equation}\label{eq1.1} |\nabla\log u|^2 -\frac{\partial}{\partial t}\log u\ls \frac{n}{2t} \end{equation}for any $t>0.$
\end{thm}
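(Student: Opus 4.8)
The plan is to carry out Li--Yau's argument \cite{ly79} on $M$, with Bochner's formula replaced by the Bochner type inequality of \cite{zz10-3} and the classical maximum principle replaced by a maximum principle for weak subsolutions of parabolic equations --- the same kind of substitution that is needed for Theorem~\ref{thm1.2}. Put $f:=\log u$. By the regularity theory for positive solutions of the heat equation on Alexandrov spaces, $f$ is locally Lipschitz in $x$ and differentiable in $t$ on $M\times(0,\infty)$, with $\nabla f\in L^\infty_{\rm loc}$ and $|\nabla f|^2\in W^{1,2}_{\rm loc}$, and since $\partial_t u=\Delta u$ one has $\partial_t f=\Delta f+|\nabla f|^2$ in the weak sense, so that $w:=|\nabla f|^2-\partial_t f=-\Delta f$. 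The goal is $w\ls n/(2t)$ for $t>0$.

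First I would feed $f(\cdot,t)$, which solves the Poisson equation $\Delta f(\cdot,t)=-w(\cdot,t)$, into the Bochner type formula of \cite{zz10-3}. Since $Ric(M)\gs0$, this gives, in the sense of distributions on $M$ for each fixed $t>0$,
\begin{equation*}
\Delta|\nabla f|^2\ \gs\ 2\langle\nabla f,\nabla\Delta f\rangle+\tfrac{2}{n}(\Delta f)^2\ =\ -2\langle\nabla f,\nabla w\rangle+\tfrac{2}{n}\,w^2,
\end{equation*}
which already absorbs the missing Hessian term into $\tfrac1n(\Delta f)^2$. Differentiating $w=-\Delta f$ in $t$ and using $\partial_t f=|\nabla f|^2-w$ yields $\partial_t w=-\Delta(\partial_t f)=-\Delta|\nabla f|^2+\Delta w$, so the displayed inequality becomes the parabolic differential inequality
\begin{equation*}
\partial_t w\ \ls\ \Delta w+2\langle\nabla f,\nabla w\rangle-\tfrac{2}{n}\,w^2
\end{equation*}
on $M\times(0,T)$, valid against nonnegative test functions. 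With $F:=tw$ and $w=F/t$ this is
\begin{equation*}
\partial_t F\ \ls\ \Delta F+2\langle\nabla f,\nabla F\rangle+\frac{F}{t}\Big(1-\frac{2F}{n}\Big)\qquad\text{on }M\times(0,T),
\end{equation*}
in the same weak sense; the drift term $2\langle\nabla f,\nabla F\rangle$ is a legitimate lower order term because $\nabla f\in L^\infty_{\rm loc}$.

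It then remains to deduce $F\ls n/2$. On the open set $\{F>n/2\}$ the reaction term $\frac Ft\big(1-\frac{2F}{n}\big)$ is negative, so there $(F-\tfrac n2)_+$ is a nonnegative weak subsolution of the linear drift--heat equation $\partial_t\varphi=\Delta\varphi+2\langle\nabla f,\nabla\varphi\rangle$. Since $M$ is compact with $\partial M=\varnothing$ and $F(\cdot,t)\to0$ as $t$ tends to the initial time --- which, as in the smooth case, is arranged by first proving the estimate for the solution started from the strictly positive, locally Lipschitz datum $u(\cdot,\varepsilon)$ and then letting $\varepsilon\to0$ --- an $L^2$-energy (or Moser iteration) argument, i.e.\ a maximum principle for weak subsolutions of parabolic equations on Alexandrov spaces in the spirit of the upper bound estimates used for Theorem~\ref{thm1.2}, forces $(F-\tfrac n2)_+\equiv0$. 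This is precisely \eqref{eq1.1}.

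The main obstacle is the rigorous justification of the two displayed differential inequalities in the singular setting: $\Delta|\nabla f|^2$ has to be controlled \emph{only} through the Bochner type formula of \cite{zz10-3}, with no Hessian bound and no smooth structure available, and the resulting inequality for $F=tw$ must be established against nonnegative test functions so that a genuine weak parabolic maximum principle applies; this also requires enough parabolic regularity of $w$ (in particular $w\in L^2_{\rm loc}(0,T;W^{1,2}_{\rm loc})$) to run the energy estimate. By comparison, the remaining ingredients --- interior regularity of positive caloric functions, the reduction handling $t\to0^+$, and the weak maximum principle itself --- are by now standard on Alexandrov spaces and run parallel to the proof of Theorem~\ref{thm1.2}.
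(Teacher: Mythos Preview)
Your approach is viable in spirit but differs from the paper's in an important way. The paper does \emph{not} carry out the direct Li--Yau argument; it uses Bakry--Ledoux's semigroup interpolation \cite{bl06}. Concretely, for fixed $t>0$ the paper studies
\[
\psi(s)=T_s\bigl(T_{t-s}f\cdot|\nabla\log T_{t-s}f|^2\bigr),\qquad 0\ls s\ls t,
\]
and shows, using the Bochner inequality of Corollary~\ref{co2.2} applied to $\mathscr L_{T_{t-s}f}=(\Delta T_{t-s}f)\cdot\rv$ with $\Phi=\log$, that $\psi'(s)\gs\frac{2}{n}T_s\bigl(e^{g_s}(\Delta g_s)^2\bigr)$. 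A Cauchy--Schwarz step then yields a closed differential inequality for $\varphi(s)=\psi(s)-\Delta T_tf$, from which $1+\frac{2t}{n}\Delta(\log T_tf)\gs0$ follows \emph{algebraically}. No maximum principle is invoked at all; the semigroup averaging $T_s(\cdot)$ absorbs exactly the regularity issues you flag.

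What you propose instead is the classical route: derive a weak parabolic inequality for $w=-\Delta\log u$ (equivalently for $F=tw$) and then run a weak parabolic maximum principle. The Bochner step is the same, and the commutation $\partial_t(\Delta\log u)=\Delta(\partial_t\log u)$ you use does hold once one unpacks both sides via the product and chain rules for $\mathscr L$ together with $u\in\mathbf D(\Delta^m)$ and Lemma~\ref{lem5.1}; so the differential inequality for $F$ can be made rigorous. The genuine extra burden in your route is the parabolic maximum principle for weak subsolutions on Alexandrov spaces. You call this ``by now standard'' and point to the proof of Theorem~\ref{thm1.2}, but the paper only develops the \emph{elliptic} version (the De Giorgi--type level-set argument in Theorem~\ref{thm3.2}); a parabolic analogue with an $L^\infty$ drift $2\langle\nabla f,\nabla F\rangle$ and the correct handling of the initial condition would have to be written out. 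That is precisely why the paper opts for Bakry--Ledoux: as the introduction notes, their method ``used only Bochner's formula'', so the maximum-principle machinery --- the delicate part on singular spaces --- is bypassed entirely. Your plan would work, but it is longer and leaves the hardest analytic step (the parabolic maximum principle) unproved rather than avoided.
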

Here $\Delta $ is the generator of the canonical Dirichlet form on $M$ (see Section 5 for the details).
As an application of this estimate, a sharper Harnack inequality of positive solutions of heat equation is obtained (see Corollary \ref{cor5.4}).

When $M$ is a smooth Riemannian manifold, Li--Yau in \cite{ly86} proved \eqref{eq1.1}  by Bochner's formula and smooth maximum principle.
In \cite{bl06}, Bakry--Ledoux developed an abstract method to prove \eqref{eq1.1}. They used only Bochner's formula.  We will use
Bakry--Ledoux's method to prove Theorem \ref{thm1.4}.  To apply the Bochner type formula in \cite{zz10-3}, we need to  establish
necessary regularity for positive solutions of the heat equations.

The paper is organized as follows. In Section 2, we recall some necessary
materials for Alexandrov spaces.   In Section 3, we will proved the gradient estimate for the first eigenfunction and establish a comparison result for the maximum of eigenfunctions.
In Section 4, we will prove Theorem \ref{thm1.2} and some corollaries of it. Obata type theorem and some explicit lower bound estimates will be given in this section.
In the last section, we will consider the heat equations on Alexandrov spaces. Li--Yau's parabolic estimate and a sharper Harnack estimate of positive solutions
of heat equations will be obtained in this section.

\noindent\textbf{Acknowledgements.} We would like to thank Professor Jun Ling for his interesting in the paper.  The third author is partially supported by NSFC 10831008.

\section{Preliminaries on Alexandrov spaces}

Let $(X,|\cdot\cdot|)$ be a metric space. A rectifiable curve $\gamma$ connecting two points $p,q$ is called a geodesic if its length is equal to $|pq|$ and
it has unit speed. A metric space $X$ is called a geodesic space if every pair points $p,q\in X$ can be connected by \emph{some} geodesic.

Let $k\in\R$ and $l\in\mathbb N$. Denote by $\mathbb M^l_k$ the simply connected, $l$-dimensional space form of constant sectional curvature $k$.
The model spaces are $\M^2_k$. Given three points $p,q,r$ in a geodesic space $X$, we can take a comparison triangle $\triangle \bar p\bar q\bar r$
in $\M^2_k$ such that $|\bar p\bar q|=|pq|$, $|\bar q\bar r|=|qr|$ and $|\bar r\bar p|=|rp|$. If $k>0$, we add the assumption
$|pq|+|qr|+|rp|<2\pi/\sqrt{k}$. Angles $\wa_k pqr:=\angle \bar p\bar q\bar r$ etc. are called comparison angles.

A geodesic space $X$ is called an Alexandrov space (of locally curvature bounded below) if it satisfies the following property:\\
\indent(i) it is locally compact;\\
\indent(ii)  for any point $x\in X$ there exists a neighborhood $U_x$ of $x$ and a real number $\kappa$ such that, for any four different points
$p, a, b,c$ in $U_x$, we have
 $$\wa_\kappa apb+\wa_\kappa bpc+\wa_\kappa cpa\ls 2\pi.$$
The Hausdorff dimension of an Alexandrov space is always an integer.  We refer to the seminar paper \cite{bgp92} or the text book \cite{bbi01} for the details.

Let $n\gs2$ and $M$  an $n$-dimensional  Alexandrov space and $\Omega$ be a domain in $M$. The Sobolev spaces $W^{1,p}(\Omega)$ is well defined (see, for example \cite{kms01}).
We denote  by $Lip_0(\Omega)$ the set of Lipschitz continuous functions on $\Omega$ with compact support in $\Omega.$
Spaces $W_0^{1,p}(\Omega)$ is defined by the closure of $Lip_0(\Omega)$ under  $W^{1,p}(\Omega)$-norm.
We say a function $u\in W^{1,p}_{loc}(\Omega)$ if $u\in W^{1,p}(\Omega')$ for every open subset $\Omega'\Subset\Omega.$

Denote by $\rv$ the $n$-dimensional Hausdorff measure on $M$. The canonical Dirichlet energy (form) $\mathscr E: W_0^{1,2}(\Omega)\times W_0^{1,2}(\Omega)\to \R$ is defined by
 $$\mathscr E(u,v):=\int_\Omega\ip{\nabla u}{\nabla v}d{\rm vol}\qquad {\rm for}\ u,v\in W_0^{1,2}(\Omega).$$

 Given a function $u\in W_{loc}^{1,2}(\Omega)$, a functional $\mathscr L_u$ is defined on $Lip_0(\Omega)$ by
$$\mathscr L_u(\phi):=-\int_\Omega\ip{\nabla u}{\nabla \phi}d{\rm vol},\qquad \forall \phi\in Lip_0(\Omega).$$

Let $f\in L^2(\Omega)$. If $u\in W_{loc}^{1,2}(\Omega)$ such that $\mathscr L_u$
is bounded below (or above) in the following sense that
$$\mathscr L_u(\phi)\gs\int_\Omega f\phi d{\rm vol}\qquad \Big({\rm or}\quad \mathscr L_u(\phi)\ls\int_\Omega f\phi d{\rm vol}\Big)$$
 for all nonnegative $\phi\in Lip_0(\Omega)$, then  the functional $\mathscr L_u$ is a signed Radon  measure. In this case, $u$ is said to be a sub--solution
 (super--solution, resp.) of Poisson equation
 $$\mathscr L_u=f\cdot{\rm vol}.$$

 A function $u$ is a (weak) solution of Poisson equation $\mathscr L_u=f\cdot{\rm vol}$ on $\Omega$ if it is both a sub--solution and a super--solution
 of the equation. In particular, a (weak) solution of $\mathscr L_u=0$ is called a harmonic function.

If $f,g\in W^{1,2}(\Omega)$ and $\mathscr L_g$ is a signed Radon measure, then
$$\big(f\mathscr L_g\big)(\phi)=\int_\Omega\phi f\mathscr L_g(dx)=-\int_\Omega \ip{f\nabla \phi+\phi\nabla f}{\nabla g}d\rv $$
for any $\phi\in Lip_0(\Omega).$ Hence, it is easy to check that if $f, g, fg\in W^{1,2}(\Omega)$ and $\mathscr L_f, \mathscr L_g, \mathscr L_{fg}$
 are signed Radon measures, we have
$$\mathscr L_{fg}=f\mathscr L_g+g\mathscr L_f+2\ip{\nabla f}{\nabla g}\cdot\rv$$
and if, in addition, $f$ is bounded, then we have $$\mathscr L_{\Phi(f)}=\Phi'(f)\mathscr L_f+\Phi''(f)|\nabla f|^2\cdot\rv$$for any $\Phi\in C^2(\R).$

 In \cite{zz10-1}, the second and third authors introduced a notion of ``Ricci curvature has a lower bound  $R$", denoted by $Ric\gs R$.
 On an $n$-dimensional Alexandrov space $M$, the condition $Ric\gs R$ implies that $M$ (equipped with its Hausdorff measure) satisfies
 Sturm--Lott--Villani's convature dimension condition $CD(n,R)$ \cite{s06,lv07} and  Kuwae--Shioya--Ohta's infinitesimal Bishop-Gromov condition
  (or measure contraction property) $BG(n,R)$  \cite{ks07,o07} (see \cite{petu09} and Appendix in \cite{zz10-1}). Of course,
  an $n$-dimensional Alexandrov space $M$ with curvature $\gs K$ must have $Ric(M)\gs (n-1)K$.

In \cite{zz10-3},  the following Bochner type formula was established.
\begin{thm}$($Theorem 1.2 in \cite{zz10-3}$)$\label{thm2.1}\indent Let $M$ be an $n$-dimensional Alexandrov space with Ricci curvature bounded from below by $R$,
and $\Omega$ be a bounded domain in $M$.
 Let  $F(x,s):\Omega\times[0,+\infty)\to \R$ be a Lipschitz function  and satisfy the following:\\
\indent (a)\indent  there exists a zero measure set $\mathcal N\subset \Omega$ such that for all $s\gs0$, the functions $F(\cdot,s)$ are differentiable at any
 $x\in \Omega\backslash \mathcal N;$\\
\indent (b)\indent the function $F(x,\cdot)$ is of class $C^1$ for  all $x\in \Omega$  and the function $\frac{\partial F}{\partial s}(x,s)$ is continuous,
 non-positive on $\Omega\times [0,+\infty)$.

Suppose that $u$ is  Lipschitz on $\Omega$ and satisfies
  $$\mathscr L_u=F(x,|\nabla u|^2)\cdot\rv. $$

Then we have $|\nabla u|^2\in W^{1,2}_{loc}(\Omega)$ and
 \begin{equation*}
 \mathscr L_{|\nabla u |^2}
 \gs \Big(\frac{2}{n}F^2(x,|\nabla u|^2)+2\ip{\nabla u}{\nabla F(x,|\nabla u|^2)}+2R|\nabla u|^2\Big)\cdot\rv,
\end{equation*}
provided  $|\nabla u|$ is lower semi-continuous at almost all $x\in \Omega$  $($That is, there exists a representative of $|\nabla u|$,
 which is lower semi-continuous at almost all $x\in \Omega.)$.
\end{thm}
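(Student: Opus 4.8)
The plan is to establish the inequality first on the \emph{regular} part of $M$ --- where, up to an $\rv$-null set, the space is an ordinary Riemannian manifold --- and then to propagate it across the singular set, which is $\rv$-null and, since $\partial M=\varnothing$, has Hausdorff codimension at least two and hence (by the standard capacity estimates available in this setting) $\mathrm{cap}_2(S_M\cap\Omega)=0$. Concretely: let $S_M\subset M$ be the set of points whose tangent cone is not isometric to $\R^n$, and $\Omega_{\mathrm{reg}}:=\Omega\setminus S_M$. Any $\phi\in Lip_0(\Omega)$ is a $W^{1,2}$-limit of Lipschitz functions supported in $\Omega_{\mathrm{reg}}$, so it suffices to prove $|\nabla u|^2\in W^{1,2}_{loc}(\Omega)$ together with the stated inequality tested against $0\ls\phi\in Lip_0(\Omega_{\mathrm{reg}})$; removing the cutoffs near $S_M$ at the end is legitimate once $|\nabla u|^2\in W^{1,2}_{loc}(\Omega)$ and the right-hand side lies in $L^1_{loc}$. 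On $\Omega_{\mathrm{reg}}$ one has (Otsu--Shioya, Perelman) a Riemannian metric $g$, smooth off a further $\rv$-null set and with measure-valued second derivatives in general, whose Riemannian volume is $\rv$, with respect to which $\mathscr L_u=\Delta_g u\cdot\rv$ weakly; moreover $g$ admits a second-order Taylor expansion at $\rv$-a.e.\ point, and the synthetic bound $Ric(M)\gs R$ --- through the infinitesimal Bishop--Gromov condition ($BG(n,R)$) it implies --- forces $\mathrm{Ric}_g(\xi,\xi)\gs R|\xi|^2$ at $\rv$-a.e.\ point of $\Omega_{\mathrm{reg}}$.

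Set $w:=|\nabla u|^2$. The heart of the matter is to show, on $\Omega_{\mathrm{reg}}$, that $w\in W^{1,2}_{loc}$ and that $w$ is a weak supersolution of $\Delta_g w\gs\tfrac2nF(x,w)^2+2\ip{\nabla u}{\nabla F(x,w)}+2Rw$. I would obtain this by regularization: mollify $u$ in local harmonic charts of $\Omega_{\mathrm{reg}}$ (or smooth it by the heat semigroup) to get $u_\varepsilon\in C^\infty$, apply the classical Bochner identity $\tfrac12\Delta_g|\nabla u_\varepsilon|^2=|\mathrm{Hess}_g u_\varepsilon|^2+\ip{\nabla u_\varepsilon}{\nabla\Delta_g u_\varepsilon}+\mathrm{Ric}_g(\nabla u_\varepsilon,\nabla u_\varepsilon)$, and then use $\Delta_g u_\varepsilon\approx F(x,w_\varepsilon)$ together with the sign hypothesis $\partial_sF\ls0$ to rewrite it as a linear elliptic inequality for $w_\varepsilon:=|\nabla u_\varepsilon|^2$ with bounded drift $-2\partial_sF\,\nabla u$ and right-hand side bounded below by $\tfrac2nF^2+2\ip{\nabla u}{\nabla_xF}+2Rw$ modulo errors $\to0$ (here $|\mathrm{Hess}_g u_\varepsilon|^2\gs\tfrac1n(\Delta_g u_\varepsilon)^2$, $\mathrm{Ric}_g\gs R$, and the $\partial_sF$-term is absorbed into $\ip{\nabla u}{\nabla F}$, the full gradient of $F(x,w)$). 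A Caccioppoli estimate yields a uniform $W^{1,2}_{loc}$ bound on $w_\varepsilon$, and passing to the limit gives $w\in W^{1,2}_{loc}(\Omega_{\mathrm{reg}})$ and the supersolution property. Testing against $0\ls\phi\in Lip_0(\Omega_{\mathrm{reg}})$ gives $\mathscr L_w(\phi)\gs\int_\Omega\phi\big(\tfrac2nF(x,w)^2+2\ip{\nabla u}{\nabla F(x,w)}+2Rw\big)d\rv$, and the reduction of the first paragraph extends this to all of $\Omega$.

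The main obstacle is the regularity step: there is no Calderón--Zygmund theory or smooth maximum principle, and the chart metrics $g$ are only $BV$, so one must control the measure-valued curvature of $g$, justify that the $\mathrm{Ric}_g$ and $|\mathrm{Hess}_g u_\varepsilon|^2$ terms act as genuine pointwise lower bounds $\rv$-a.e., and establish the $W^{1,2}_{loc}$ a priori bound on $w_\varepsilon$ uniformly in $\varepsilon$. An alternative, more intrinsic route would bypass charts altogether and analyze $u$ along (quasi-)geodesics via Petrunin's second variation of arc-length and an approximation by $DC$ functions, at a comparable technical cost. Beyond this, the argument is a transcription of the smooth Bochner computation, the two substantive inputs being the elementary inequality $|\mathrm{Hess}_g u|^2\gs\tfrac1n(\Delta_g u)^2=\tfrac1nF^2$ and the Ricci lower bound $\mathrm{Ric}_g\gs R$.
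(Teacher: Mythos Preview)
The paper does not prove Theorem~\ref{thm2.1}: it is quoted from \cite{zz10-3} (as the label ``Theorem 1.2 in \cite{zz10-3}'' indicates) and used as a black box, with no argument supplied in this paper. So there is nothing here to compare your proposal against.

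On the substance of your proposal: the primary route you sketch --- mollify $u$ in ``local harmonic charts'' on $\Omega_{\mathrm{reg}}$ and apply the classical Bochner identity to $u_\varepsilon$ --- has a real gap. The Otsu--Shioya / Perelman Riemannian metric $g$ on the regular part is only continuous (at best $BV_{loc}$ in Perelman's $DC$ calculus), so harmonic charts in the usual sense are not available, and smoothing $u$ does nothing for the regularity of $g$: the Bochner identity $\tfrac12\Delta_g|\nabla u_\varepsilon|^2=|\mathrm{Hess}\,u_\varepsilon|^2+\ip{\nabla u_\varepsilon}{\nabla\Delta_g u_\varepsilon}+\mathrm{Ric}_g(\nabla u_\varepsilon,\nabla u_\varepsilon)$ needs $g$ to be $C^2$, not $u$. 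Likewise, the step ``$BG(n,R)$ forces $\mathrm{Ric}_g\gs R$ $\rv$-a.e.'' is asserted, not proved; since $\mathrm{Ric}_g$ is at best measure-valued here, that assertion is exactly one of the hard points. You flag all of this in your ``main obstacle'' paragraph, but the flagged difficulties are the whole content of the theorem, not technicalities to be cleaned up afterward.

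What you call the ``alternative, more intrinsic route'' --- Petrunin's perturbation and second variation along geodesics, Perelman's semiconcavity, and an argument with $DC$/semiconcave functions --- is in fact the route taken in \cite{zz10-3}. That proof does not use charts or mollification at all; it works pointwise via comparison geometry and then integrates. If you want to reconstruct the argument, that is the direction to pursue.
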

For our purpose in this paper, we give the following corollary.
\begin{cor}\label{co2.2} Let $M$ be an $n$-dimensional Alexandrov space with Ricci curvature bounded from below by $R$,
and $\Omega$ be a bounded domain in $M$. Let $f$ be a Lipschitz continuous function in $\Omega$ and $u\in W^{1,2}_{loc}(\Omega)$ satisfies
  $$\mathscr L_u=f\cdot\rv. $$

Suppose that a real function $\Phi(t)\in C^3(\mathbb R)$ satisfies $\Phi'(t)\not\equiv0$ and $\Phi''(t)\ls0$ for all $t$ in the range of $u$.
 Then we have $|\nabla \Phi(u)|^2\in W^{1,2}_{loc}(\Omega) $ and
 \begin{equation*}
 \mathscr L_{|\nabla \Phi(u) |^2}
 \gs \Big(\frac{2}{n}\Psi^2+2\ip{\nabla \Phi(u)}{\nabla \Psi}+2R|\nabla\Phi(u)|^2\Big)\cdot\rv,
\end{equation*}
where
 $$\Psi(x):=\Phi'(u)f(x)+\Phi''(u)|\nabla u|^2.$$
\end{cor}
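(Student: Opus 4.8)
The plan is to recognize $v:=\Phi(u)$ as a weak solution of a Poisson equation of precisely the form treated in Theorem~\ref{thm2.1}, and then to feed $v$ into that theorem. First I would upgrade the regularity of $u$: since $f$ is Lipschitz, $u$ is locally Lipschitz on $\Omega$ (elliptic regularity for Poisson equations with Lipschitz right-hand side on Alexandrov spaces; cf.\ \cite{zz10-3}, where one also has a representative of $|\nabla u|$ that is lower semicontinuous at a.e.\ point). Applying Theorem~\ref{thm2.1} to $u$ itself, with the $s$-independent nonlinearity $F_0(x,s):=f(x)$, already gives $|\nabla u|^2\in W^{1,2}_{loc}(\Omega)$. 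Since $\Phi\in C^3$ and $u$ is locally bounded, $v=\Phi(u)$ is locally Lipschitz, $|\nabla v|^2=\Phi'(u)^2|\nabla u|^2\in W^{1,2}_{loc}(\Omega)$ (a product of the Lipschitz function $\Phi'(u)^2$ with $|\nabla u|^2$), and $|\nabla v|=|\Phi'(u)|\,|\nabla u|$ is lower semicontinuous at a.e.\ point because $|\Phi'(u)|$ is continuous. Finally, by the chain rule recalled in Section~2, $\mathscr L_{\Phi(u)}=\Phi'(u)\mathscr L_u+\Phi''(u)|\nabla u|^2\cdot\rv=\Psi\cdot\rv$, with $\Psi\in W^{1,2}_{loc}(\Omega)$ by the same considerations.

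The key step is to exhibit $\Psi$ in the form $F(x,|\nabla v|^2)$ for an admissible nonlinearity $F$. On the open set $\Omega_+:=\{x:\Phi'(u(x))\ne 0\}$ one inverts $|\nabla v|^2=\Phi'(u)^2|\nabla u|^2$ to get $\Psi=\Phi'(u)f+\tfrac{\Phi''(u)}{\Phi'(u)^2}|\nabla v|^2$, so on any subdomain $\Omega'\Subset\Omega_+$ — on which $|\Phi'(u)|\gs c>0$ and $|\nabla v|^2$ is bounded — I would set
$$F(x,s):=\Phi'(u(x))\,f(x)+\frac{\Phi''(u(x))}{\Phi'(u(x))^2}\,\theta(s),$$
where $\theta\colon[0,\infty)\to[0,\infty)$ is a fixed smooth nondecreasing function equal to the identity on the range of $|\nabla v|^2$ and bounded beyond it; this truncation is needed only to make $F$ genuinely Lipschitz on $\Omega'\times[0,\infty)$ (otherwise the $x$-Lipschitz bound would grow with $s$). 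Then $F$ is Lipschitz, of class $C^1$ in $s$ with $\tfrac{\partial F}{\partial s}=\tfrac{\Phi''(u)}{\Phi'(u)^2}\theta'\ls 0$ continuous — this is where the hypothesis $\Phi''\ls 0$ enters — condition (a) holds because $u$ and $f$ are Lipschitz, and $F(x,|\nabla v|^2)=\Psi$ on $\Omega'$. Theorem~\ref{thm2.1} applied to $v$ on $\Omega'$ then yields the asserted inequality there, after replacing $F(x,|\nabla v|^2)$ by $\Psi$ and $\nabla F(x,|\nabla v|^2)$ by $\nabla\Psi$, which is legitimate since these functions agree in $W^{1,2}_{loc}(\Omega')$. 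Exhausting $\Omega_+$ by such $\Omega'$ gives the inequality on all of $\Omega_+$.

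The hard part — and the only point genuinely beyond the special case $\Phi=\mathrm{id}$ — is the degenerate set $Z:=\{x:\Phi'(u(x))=0\}$, where the above $F$ blows up. (If $\Phi'$ does not vanish on the range of $u$ then $Z=\varnothing$ and the proof is already complete.) In general, since $\Phi''\ls 0$ on the range of $u$, $\Phi'$ is monotone there, so $Z=\{u\in[a,b]\}$ for an interval on which $\Phi$ is constant (or a single point). Hence $\Phi(u)$ is constant on $Z$, so $\nabla v=0$ a.e.\ on $Z$, whence $|\nabla v|^2=0$ a.e.\ on $Z$, while $\Psi=0$ a.e.\ on $Z$ as well (using $\nabla u=0$ a.e.\ on $\{u=a\}\cup\{u=b\}$ and $\Phi''=0$ on $(a,b)$); thus the right-hand side of the claimed inequality is the zero measure on $Z$. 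On the other hand, expanding $\mathscr L_{|\nabla v|^2}=\mathscr L_{\Phi'(u)^2\cdot|\nabla u|^2}$ with the Leibniz rule of Section~2 and restricting to $Z$, every term carrying the factor $\Phi'(u)$ drops out, leaving the nonnegative absolutely continuous measure $2\Phi''(u)^2|\nabla u|^4\cdot\rv$, which in particular dominates the vanishing right-hand side on $Z$. Combining the estimate over $\Omega_+$ with that over $Z$, and recalling $|\nabla v|^2\in W^{1,2}_{loc}(\Omega)$ from the first paragraph, completes the proof.
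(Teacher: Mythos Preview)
Your argument is correct and follows the same core idea as the paper: set $v=\Phi(u)$, rewrite $\mathscr L_v=F(x,|\nabla v|^2)\cdot\rv$ with $F(x,s)=\Phi'(u)f(x)+\dfrac{\Phi''(u)}{\Phi'(u)^2}\,s$, and invoke Theorem~\ref{thm2.1}. The paper does exactly this on an arbitrary $\Omega'\Subset\Omega$ and stops there.

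Where you diverge is in the bookkeeping. You add (i) a preliminary application of Theorem~\ref{thm2.1} to $u$ to secure $|\nabla u|^2\in W^{1,2}_{loc}$ up front, (ii) a cutoff $\theta$ so that $F$ is genuinely Lipschitz on $\Omega'\times[0,\infty)$ rather than only locally Lipschitz, and (iii) a separate treatment of the degeneracy set $Z=\{\Phi'(u)=0\}$ via the Leibniz rule. The paper's proof simply writes down $F$ on all of $\Omega'$ without addressing (ii) or (iii); in particular it does not discuss what happens when $\Phi'(u)$ vanishes, so in the generality stated your version is actually the more complete one. (In the paper's sole application, $\Phi=\log$, one has $\Phi'>0$ and the issue disappears.) Your analysis on $Z$ is sound: both sides of the claimed inequality reduce to absolutely continuous measures that vanish a.e.\ on $Z$ once one uses $\Phi''\equiv 0$ on the interior of the zero interval of $\Phi'$ together with $\nabla u=0$ a.e.\ on the level sets $\{u=a\}$, $\{u=b\}$; the only thing to keep in mind is that you need $\mathscr L_{|\nabla v|^2}$ to already be a signed Radon measure on $\Omega$ before restricting to $Z$, which follows from the Leibniz expansion since $\mathscr L_{|\nabla u|^2}$ and $\mathscr L_{\Phi'(u)^2}$ are.
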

\begin{proof} Since $f$ is Lipschitz continuous in $\Omega$, in Corollary 5.5 in \cite{zz10-3}, it is shown that $u$ is locally Lipschitz continuous in $\Omega$,
 and in Corollary 5.8 in \cite{zz10-3}, it is shown that $|\nabla u|$ is lower semi-continuous in $\Omega$. Thus, $u$ is differential at almost everywhere in $\Omega$,
  and $|\nabla \Phi(u)|$ is lower semi-continuous in $\Omega$.

Fix any open set $\Omega'\Subset\Omega$ and define the function $F(x,s): \Omega'\times[0,+\infty)$ by
$$F(x,s):=\Phi'(u)f(x)+\frac{\Phi''(u)}{[\Phi'(u)]^2}\cdot s.$$
Since both $f$ and $u$ are locally Lipschitz continuous in $\Omega$, we have the function $F(\cdot,s)$ is  Lipschitz continuous on $\Omega'\times[0,+\infty)$.
Note that $\Phi''(t)\ls0$, it is easy check that $F(x,s)$ satisfies
the conditions (a) and (b) in Theorem 1.2.

From $\mathscr L_u=f\cdot\rv$, we get
\begin{equation*}\begin{split}
\mathscr L_{\Phi(u)}&=\Phi'(u)\mathscr L_u+\Phi''(u)|\nabla u|^2\cdot\rv\\
&=
\Psi(x)\cdot\rv=F(x,|\nabla \Phi(u)|^2)\cdot\rv.
\end{split}\end{equation*}
Now, we can apply Theorem 2.1 to conclude the desired result in this corollary.
\end{proof}

 The same trick as  in the proof of  Theorem 6 in \cite{bq00} gives an improvement of the Bochner inequality as following:
\begin{cor}\label{co2.3}Let $M$, $u$ and $f$ be as above in Corollary \ref{co2.2}, and let $\Omega\subset M$ be an open set. Assume $|\nabla u|\gs c>0$ a.e. on $\Omega$
 for some constant $c$. Then we have the following improved Bochner formula
\begin{equation}\label{e2.1}
\mathscr L_{|\nabla u|^2}\gs \Big(\frac{2}{l}f^2+2\ip{\nabla u}{\nabla f}+2R|\nabla u|^2+ \frac{2l}{l-1}\Big(\frac{f}{l}-\frac{\ip{\nabla u}{\nabla |\nabla u|^2}}{2|\nabla u|^2}\Big)^2\Big)\cdot\rv
\end{equation}on $\Omega$, for all real number $l\gs n.$
\end{cor}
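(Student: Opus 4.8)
The plan is to reproduce the device from the proof of Theorem~6 in \cite{bq00}: apply Corollary~\ref{co2.2} not to $u$ but to a one-parameter family $\Phi(u)$, rewrite the resulting Bochner inequalities back in terms of $\mathscr L_{|\nabla u|^2}$, and optimize over the parameter. Since \eqref{e2.1} is a local statement, it suffices to work on a fixed bounded $\Omega'\Subset\Omega$; there $u$ is locally Lipschitz (Corollary~5.5 of \cite{zz10-3}), hence bounded, and $G:=|\nabla u|^2$ is locally bounded, lies in $W^{1,2}_{loc}$, and has $\mathscr L_G$ a signed Radon measure (Theorem~\ref{thm2.1}); set $a:=\ip{\nabla u}{\nabla G}/(2G)$, which is defined a.e.\ and lies in $L^2_{loc}$ because $G\gs c^2>0$. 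A preliminary remark: as $1/l\ls1/n$ for $l\gs n$, the conclusions of Theorem~\ref{thm2.1} and Corollary~\ref{co2.2} hold verbatim with $n$ replaced by any $l\gs n$ (just weaken $2/n$ to $2/l$); I would use this $l$-version throughout.

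Now fix $\mu\ls0$ and apply the $l$-version of Corollary~\ref{co2.2} to $\Phi(t)=\mu^{-1}e^{\mu t}$ (and $\Phi(t)=t$ when $\mu=0$): then $\Phi'=e^{\mu t}>0$, $\Phi''=\mu e^{\mu t}\ls0$, $\Phi'''=\mu^2 e^{\mu t}$, $|\nabla\Phi(u)|^2=e^{2\mu u}G$, and $\Psi=e^{\mu u}(f+\mu G)$. I would expand $\mathscr L_{e^{2\mu u}G}$ by the product rule $\mathscr L_{pb}=p\mathscr L_b+b\mathscr L_p+2\ip{\nabla p}{\nabla b}\cdot\rv$ with $p=e^{2\mu u}$, $b=G$, together with the chain rule $\mathscr L_{e^{2\mu u}}=(2\mu f+4\mu^2 G)e^{2\mu u}\cdot\rv$, insert into the Bochner inequality for $\Phi(u)$, and divide through by the positive locally Lipschitz factor $e^{2\mu u}$. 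This routine computation should give the family
\begin{equation*}
\mathscr L_{G}\gs\Big(\tfrac2l f^2+2\ip{\nabla u}{\nabla f}+2RG+4\mu G\big(\tfrac fl-a\big)-\tfrac{2(l-1)}{l}\mu^2G^2\Big)\cdot\rv,\qquad \mu\ls0 .
\end{equation*}
Pointwise the bracketed density is a downward parabola in $\mu$, maximal over $\mu\in\R$ at $\mu^\ast=\frac{l}{l-1}\cdot\frac{f/l-a}{G}$, with value $\tfrac2l f^2+2\ip{\nabla u}{\nabla f}+2RG+\tfrac{2l}{l-1}(\tfrac fl-a)^2$ --- exactly the right side of \eqref{e2.1}, since $a=\ip{\nabla u}{\nabla|\nabla u|^2}/(2|\nabla u|^2)$. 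Because we are constrained to $\mu\ls0$, this optimal density is attained within the family only on $\{a\gs f/l\}$; to cover $\{a\ls f/l\}$ I would run the same argument with $u$ replaced by $-u$ (which still satisfies the hypotheses, with $f\mapsto-f$, $a\mapsto-a$), the two regions interchanging.

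The delicate step --- and the only real departure from \cite{bq00}, where $\Phi$ may simply be chosen pointwise --- is to upgrade this countable family of \emph{global} measure inequalities $\mathscr L_G\gs g_\mu\cdot\rv$ (one for each $\mu\in\mathbb Q\cap(-\infty,0]$, in the $u$- and $-u$-versions) to the single inequality $\mathscr L_G\gs g\cdot\rv$ with $g:=\sup_\mu g_\mu$, the density sought in \eqref{e2.1}. I expect to do this by a measure-decomposition argument: each $g_\mu$ is locally integrable, so $\mathscr L_G-g_\mu\cdot\rv$ is a nonnegative distribution, hence a positive Radon measure, and $\mathscr L_G$ admits restrictions to Borel sets; given $\varepsilon>0$, using continuity of $\mu\mapsto g_\mu(x)$ and finiteness of $g$ a.e., choose a countable Borel partition $\{E_k\}$ of $\Omega'$ and parameters $\mu_k$ with $g_{\mu_k}\gs g-\varepsilon$ on $E_k$, restrict the corresponding inequalities to $E_k$, add them over $k$ (using that $\mathscr L_G$ is the sum of its restrictions to the $E_k$), and let $\varepsilon\to0$. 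This yields $\mathscr L_G\gs g\cdot\rv$, i.e.\ \eqref{e2.1}. The main obstacle is therefore precisely this passage from a parametrized family of distributional inequalities to their envelope; the rest is the calculus of Section~2 and the elementary one-variable optimization above.
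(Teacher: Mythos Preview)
Your proposal is correct and follows precisely the route the paper indicates --- the change-of-function trick from Theorem~6 of \cite{bq00}, applied through Corollary~\ref{co2.2}; the paper supplies no further detail beyond that citation. You are also right to flag the passage from the countable family $\mathscr L_G\gs g_\mu\cdot\rv$ to the envelope $\mathscr L_G\gs(\sup_\mu g_\mu)\cdot\rv$, which is not an issue in the smooth setting of \cite{bq00}; your partition argument works, though the Lebesgue decomposition of the signed Radon measure $\mathscr L_G$ relative to $\rv$ disposes of it in one line (the absolutely continuous density must dominate each $g_\mu$ a.e., hence their countable supremum, while the singular part is already nonnegative).
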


The following mean value inequality  was also obtained in \cite{zz10-3}.
\begin{prop}$($Corollary 4.5 in \cite{zz10-3}$)$\label{pr2.2}\indent Let $M$ be an $n$-dimensional Alexandrov space with Ricci curvature bounded from below by
 $R$ and let $\Omega$ be a domain in $  M$. If $u$ is continuous and satisfies that $\mathscr L_u\ls c_1\cdot \rv$ on $\Omega$ and $u\gs0$, then, for any $p\in \Omega$,
  there exists a constant $c_2=c_2(n,\Omega, p, c_1)$ such that
  $$\frac{1}{\rv\big( B_o(r)\subset T_p^{R/(n-1)}\big)}\int_{B_p(r)}ud\rv\ls u(p)+ c_2 r^2$$
   for any sufficiently small $r$ with $B_p(r)\Subset\Omega$, where $T_p^{R/(n-1)}$ is  the $\frac{R}{n-1}$-cone over $\Sigma_p$, the space of directions (see \cite{bbi01} p. 354).
\end{prop}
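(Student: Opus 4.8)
The plan is to transplant to the Alexandrov setting the classical potential-theoretic proof of the sub-mean-value property for super-solutions, with the distributional Laplacian comparison for the distance function $d_p$ playing the role of the mean-curvature estimate for geodesic spheres. Write $k:=R/(n-1)$, let $\mathrm{sn}_k$ solve $\mathrm{sn}_k''+k\,\mathrm{sn}_k=0$, $\mathrm{sn}_k(0)=0$, $\mathrm{sn}_k'(0)=1$, and set $\mathcal A_k(t):=\mathcal H^{n-1}(\Sigma_p)\,\mathrm{sn}_k(t)^{n-1}$, so that $V(r):=\rv\big(B_o(r)\subset T_p^{R/(n-1)}\big)=\int_0^r\mathcal A_k(t)\,dt$. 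Let
$$\Gamma_r(s):=\int_s^r\frac{dt}{\mathrm{sn}_k(t)^{n-1}},\qquad 0<s\ls r,$$
be the radial Green kernel of the model ball $B_o(r)\subset T_p^{R/(n-1)}$; it satisfies $\mathrm{sn}_k(s)^{n-1}\Gamma_r'(s)\equiv-1$, $\Gamma_r(r)=0$, and $\Gamma_r''+(n-1)\tfrac{\mathrm{sn}_k'}{\mathrm{sn}_k}\Gamma_r'\equiv0$ on $(0,r]$, while $\Gamma_r(d_p)$ is $\rv$-integrable near $p$. Two geometric facts will be invoked: \emph{(i)} the Laplacian comparison on an $n$-dimensional Alexandrov space with $Ric\gs(n-1)k$, i.e. $\mathscr L_{d_p}$ is a signed Radon measure on $\Omega\setminus\{p\}$ with $\mathscr L_{d_p}\ls(n-1)\tfrac{\mathrm{sn}_k'(d_p)}{\mathrm{sn}_k(d_p)}\cdot\rv$; and \emph{(ii)} the infinitesimal Bishop--Gromov property $BG(n,R)$ (see the Appendix of \cite{zz10-1}): $s\mapsto\rv(B_p(s))/V(s)$ is non-increasing with limit $1$ as $s\to0^+$, whence $\mathcal H^{n-1}(\partial B_p(s))\ls\mathcal A_k(s)$ for all small $s$, and $\mathcal H^{n-1}(\partial B_p(s))/\mathcal A_k(s)\to1$ along some sequence $s\downarrow0$.

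Set $w:=\Gamma_r(d_p)$ on $B_p(r)\setminus\{p\}$, locally Lipschitz there and $\gs0$. Since $\Gamma_r'<0$, the chain rule $\mathscr L_{\Gamma_r(d_p)}=\Gamma_r'(d_p)\mathscr L_{d_p}+\Gamma_r''(d_p)|\nabla d_p|^2\cdot\rv$, fact \emph{(i)}, and $|\nabla d_p|=1$ a.e. give
$$\mathscr L_w\gs\Big(\Gamma_r''(d_p)+(n-1)\tfrac{\mathrm{sn}_k'(d_p)}{\mathrm{sn}_k(d_p)}\,\Gamma_r'(d_p)\Big)\cdot\rv=0\qquad\text{on }B_p(r)\setminus\{p\},$$
the negative part of $\mathscr L_{d_p}$ along generalized cut loci only helping because $\Gamma_r'<0$. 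Fix $r$ with $B_p(r)\Subset\Omega$. For a.e. small $\epsilon\in(0,r)$, a Green-type identity for the pair $u,w$ on the annulus $A_\epsilon:=B_p(r)\setminus\overline{B_p(\epsilon)}$ --- valid since $u\in W^{1,2}_{loc}\cap C$, $w$ is Lipschitz and $\mathscr L_w$ is Radon on $A_\epsilon$, and provable by testing $\mathscr L_u$, $\mathscr L_w$ against $w\eta_\delta$, $u\eta_\delta$ with radial cutoffs $\eta_\delta$ and using the coarea formula as $\delta\to0$ --- combined with $\mathscr L_w\gs0$, $u\gs0$ on the $u\,d\mathscr L_w$ term, $\mathscr L_u\ls c_1\cdot\rv$, $w\gs0$ on the $w\,d\mathscr L_u$ term, and the boundary data $\Gamma_r(r)=0$, $\mathrm{sn}_k(s)^{n-1}\Gamma_r'(s)=-1$, yields
$$\frac{1}{\mathrm{sn}_k(r)^{n-1}}\int_{\partial B_p(r)}\!\!u\,d\mathcal H^{n-1}\ \ls\ \frac{1}{\mathrm{sn}_k(\epsilon)^{n-1}}\int_{\partial B_p(\epsilon)}\!\!u\,d\mathcal H^{n-1}+\Gamma_r(\epsilon)\,\mathscr L_u\big(B_p(\epsilon)\big)+c_1\int_{A_\epsilon}w\,d\rv.$$

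Let $\epsilon\downarrow0$ along a sequence as in \emph{(ii)}. Continuity of $u$ at $p$ and \emph{(ii)} give $\mathrm{sn}_k(\epsilon)^{-(n-1)}\int_{\partial B_p(\epsilon)}u\,d\mathcal H^{n-1}\to u(p)\,\mathcal H^{n-1}(\Sigma_p)$; since $\mathscr L_u(B_p(\epsilon))\ls c_1\rv(B_p(\epsilon))\ls\max\{c_1,0\}\,V(\epsilon)$ and $\Gamma_r(\epsilon)V(\epsilon)\to0$, the middle term has non-positive $\limsup$; and $\int_{A_\epsilon}w\,d\rv\to\int_{B_p(r)}w\,d\rv$, which by the coarea formula and \emph{(ii)} obeys
$$\int_{B_p(r)}w\,d\rv\ls\mathcal H^{n-1}(\Sigma_p)\int_0^r\Gamma_r(s)\,\mathrm{sn}_k(s)^{n-1}\,ds=\frac{\mathcal H^{n-1}(\Sigma_p)}{2n}\,r^2\big(1+O(r^2)\big).$$
Hence, for a.e. small $r$,
$$\int_{\partial B_p(r)}u\,d\mathcal H^{n-1}\ \ls\ \mathcal A_k(r)\big(u(p)+c\,r^2\big),\qquad c=\tfrac{\max\{c_1,0\}}{2n}\big(1+O(r_0^2)\big),\ \ r\ls r_0(\Omega,p).$$
Integrating this in $r$ via $\int_{B_p(r)}u\,d\rv=\int_0^r\big(\int_{\partial B_p(s)}u\,d\mathcal H^{n-1}\big)ds$, using $\int_0^r\mathcal A_k(s)\,ds=V(r)$, and then continuity in $r$, one obtains $\tfrac{1}{V(r)}\int_{B_p(r)}u\,d\rv\ls u(p)+c_2\,r^2$ with $c_2=c_2(n,\Omega,p,c_1)$, as claimed.

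The main obstacle is fact \emph{(i)}: on an Alexandrov space $d_p$ is only locally semiconcave away from $p$, so it is not a priori clear that $\mathscr L_{d_p}$ has no singular \emph{positive} part along generalized cut loci and obeys the sharp upper bound $(n-1)\tfrac{\mathrm{sn}_k'(d_p)}{\mathrm{sn}_k(d_p)}$; this is precisely where the lower bound $Ric\gs(n-1)k$ enters, and it is the content of the Laplacian comparison theorem on Alexandrov spaces with lower Ricci bound (equivalently, it underpins $BG(n,R)$). Only the \emph{upper} bound on $\mathscr L_{d_p}$ is used, so the lack of a lower Laplacian bound for $d_p$ (which indeed fails in general) is harmless. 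Granting \emph{(i)}, the remaining delicate point is the limit $\epsilon\downarrow0$, which uses the continuity of $u$ at $p$ to identify the inner-boundary limit with $u(p)\mathcal H^{n-1}(\Sigma_p)$, and the fact --- again from $BG(n,R)$ --- that the infinitesimal volume density of $M$ at $p$ matches that of the model cone $T_p^{R/(n-1)}$.
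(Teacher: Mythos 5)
The paper itself does \emph{not} prove this Proposition: it is imported verbatim as ``Corollary 4.5 in \cite{zz10-3}'', so there is no in-paper argument against which to check your proposal.

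On its own merits, your proposal is a sensible transplantation of the classical potential-theoretic proof (model Green kernel $\Gamma_r(d_p)$, Laplacian comparison, Green identity on the annulus, then let $\epsilon\downarrow0$ and integrate in $r$), and the sign bookkeeping is right: $\Gamma_r'<0$ converts the upper Laplacian bound on $d_p$ into $\mathscr L_w\gs0$, and $u\gs0$, $w\gs0$, $\mathscr L_u\ls c_1\rv$ are used exactly in the directions that make the annulus identity a one-sided estimate. The small-radius asymptotics ($\Gamma_r(\epsilon)V(\epsilon)\to0$, $\int_{B_p(r)}w\ls\frac{\mathcal H^{n-1}(\Sigma_p)}{2n}r^2(1+O(r^2))$, the cone-volume density $\rv(B_p(s))/V(s)\to1$ giving an area subsequence) are also correct. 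Conceptually this is likely close in spirit to what is done in \cite{zz10-3}, where the model cone $T_p^{R/(n-1)}$ plays exactly the role your $\Gamma_r$ assigns to it; but I cannot certify that the implementation there is the same.

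Two points you assert that actually carry most of the weight and would need to be made precise. First, the Laplacian comparison $\mathscr L_{d_p}\ls (n-1)\frac{\mathrm{sn}_k'(d_p)}{\mathrm{sn}_k(d_p)}\cdot\rv$ for Alexandrov spaces with the Zhang--Zhu Ricci lower bound is not a routine consequence of $BG(n,R)$; it is itself a nontrivial theorem (established in \cite{zz10-1}, and also obtainable in the $CD$/$MCP$ framework), and since it is the single place where the curvature hypothesis enters, it should be explicitly cited rather than treated as folklore. Second, and more seriously, the Green-type identity on the annulus $A_\epsilon$ is the real technical heart of this route and you only sketch it. On an Alexandrov space the level sets $\partial B_p(s)$ are not hypersurfaces, $u$ is merely in $W^{1,2}_{loc}\cap C$, and $\mathscr L_u,\mathscr L_w$ are signed Radon measures; making the ``normal derivative'' boundary terms appear as $\Gamma_r'(s)\int_{\partial B_p(s)}u\,d\mathcal H^{n-1}$ and $\Gamma_r(\epsilon)\mathscr L_u(B_p(\epsilon))$ requires a careful pass through radial cutoffs, the coarea formula for $d_p$, and an almost-every-$s$ selection (for Lebesgue points of the slice integrals). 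As written, ``provable by testing against $w\eta_\delta$, $u\eta_\delta$'' is an assertion, not a proof, and until that lemma is supplied the argument has a genuine gap. Once that integration-by-parts-on-annuli lemma is in place (it is of the type proved in the Kuwae--Machigashira--Shioya / Zhang--Zhu circle of papers), the rest of your chain of estimates closes.
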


\section{comparison theorems on gradient and maximum of eigenfunctions }
Let $M$ be a compact $n$-dimensional Alexandrov space without boundary and $Ric(M)\gs R:=(n-1)K$. Let $\lambda_1$ be the non-zero first eigenvalue and $f$
 be a first eigenfunction on $M$. That is,  $f$ is a minimizer of
 $$\lambda_1:=\inf\Big\{\frac{\int_M|\nabla \phi|^2d\rv}{\int_M\phi^2d\rv}:\ \phi\in Lip(M)\backslash\{0\}\quad {\rm and}\quad \int_M\phi d\rv=0\Big\}.$$
  It is easy to check  that $\mathscr L_f$ is a measure and satisfies \begin{equation}\label{eq3.1}\mathscr L_f=-\lambda_1 f\cdot \rv.\end{equation}

We  set $G=|\nabla f|^2$ in this section. The following regularity result is necessary for us.
\begin{lem}\label{lem3.1} $G$ is lower semi-continuous on $M$ and lies in $W^{1,2}(M).$
\end{lem}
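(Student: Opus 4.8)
The plan is to deduce the lemma directly from the results of \cite{zz10-3} applied to the eigenvalue equation \eqref{eq3.1}. The preliminary point I would record is that the eigenfunction $f$ is itself Lipschitz continuous on $M$: since $f$ is bounded (Moser iteration for \eqref{eq3.1}), the equation $\mathscr L_f=-\lambda_1f\cdot\rv$ is a Poisson equation with bounded right-hand side, and bootstrapping --- via the elliptic regularity of \cite{zz10-3}, or equivalently via the smoothing of the heat semigroup using $f=e^{\lambda_1t}P_tf$ --- yields $f\in Lip(M)$. Consequently $-\lambda_1f$ is Lipschitz continuous on $M$, so \eqref{eq3.1} is a Poisson equation with Lipschitz right-hand side.

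Given this, the lower semi-continuity of $G$ is immediate: by Corollary 5.8 of \cite{zz10-3} (the statement already quoted inside the proof of Corollary \ref{co2.2}), a solution of a Poisson equation with Lipschitz right-hand side has $|\nabla f|$ lower semi-continuous after passing to a representative, and since $|\nabla f|\gs0$ while $t\mapsto t^2$ is continuous and nondecreasing on $[0,\infty)$, the composition $G=|\nabla f|^2$ is lower semi-continuous on $M$.

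For the membership $G\in W^{1,2}(M)$ I would invoke the Bochner type formula, Theorem \ref{thm2.1}, with $u=f$ and $F(x,s):=-\lambda_1f(x)$, which does not depend on $s$. Here $F$ is Lipschitz on $M\times[0,\infty)$ because $f$ is, it is differentiable in $x$ off the measure-zero non-differentiability set of $f$, and $F(x,\cdot)$ is smooth with $\partial F/\partial s\equiv0\ls0$; thus conditions (a) and (b) hold, $\mathscr L_f=F(x,|\nabla f|^2)\cdot\rv$ is precisely \eqref{eq3.1}, and $|\nabla f|$ is lower semi-continuous by the previous step. Theorem \ref{thm2.1} then gives $|\nabla f|^2\in W^{1,2}_{loc}(\Omega)$ for every bounded domain $\Omega\subset M$; covering the compact space $M$ by finitely many metric balls and patching with a partition of unity, together with $G\in L^\infty(M)\subset L^2(M)$, upgrades this to $G\in W^{1,2}(M)$.

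I expect the only genuinely non-formal step to be the Lipschitz regularity of $f$: the elliptic regularity statements of \cite{zz10-3} quoted in the excerpt presuppose a Lipschitz right-hand side, whereas \eqref{eq3.1} only provides a bounded (hence, after one De Giorgi--Nash--Moser step, H\"older continuous) one, so closing this gap requires either the full regularity theory for Poisson equations on Alexandrov spaces or the heat-semigroup smoothing argument. Once $f\in Lip(M)$ is in hand, the rest is a routine application of Corollary 5.8 of \cite{zz10-3} and Theorem \ref{thm2.1}.
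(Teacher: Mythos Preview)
Your argument is correct and, for the $W^{1,2}$ membership, essentially identical to the paper's (both invoke Theorem~\ref{thm2.1} on the eigenvalue equation, after recording that $f$ is Lipschitz --- the paper simply cites \cite{petu03,gko10} and Corollary~5.5 of \cite{zz10-3} for this, so your worry about the ``non-formal step'' is handled by existing literature).

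Where you diverge is in the proof of lower semi-continuity. You appeal directly to Corollary~5.8 of \cite{zz10-3}, which gives lower semi-continuity of $|\nabla f|$ for Poisson equations with Lipschitz right-hand side, and then square. The paper instead extracts from Theorem~\ref{thm2.1} the differential inequality $\mathscr L_G\gs(-2\lambda_1+2R)G\cdot\rv$, and then argues via potential theory: if the coefficient $-2\lambda_1+2R\gs0$ one quotes Kinnunen--Martio \cite{km02} directly; otherwise one passes to $g(x,t)=e^{\sqrt{2\lambda_1-2R}\,t}G(x)$ on the product $M\times\R$ to reduce to $\mathscr L_g\gs0$ and then quotes \cite{km02}. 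Your route is shorter and avoids the product-space trick altogether, at the cost of relying on a sharper regularity statement from \cite{zz10-3}; the paper's route has the mild advantage of being self-contained once the Bochner inequality is in hand, and illustrates a technique (embedding into a product to kill a zeroth-order term) that is useful elsewhere.
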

\begin{proof}
It was proved that $f$ is Lipschitz continuous on $M$ in \cite{petu03}  (see also  Theorem 4.3 in \cite{gko10} or Corollary 5.5 in \cite{zz10-3}).
Now by applying Theorem \ref{thm2.1} to the equation \eqref{eq3.1}, we may deduce that $G\in W^{1,2}(M)$ and
$$\mathscr L_G\gs (-2\lambda_1+2R)G\cdot\rv.$$

If $2\lambda_1-2R\ls0$, then $\mathscr L_G\gs0$. This concludes that $G$ has a lower semi-continuous representation in $W^{1,2}(M)$ (see Theorem 5.1 in \cite{km02}).
 If $\mu:=2\lambda_1-2R>0$,  we consider the function $g=e^{\sqrt \mu t}G$ on $M \times \R$ with directly product metric and obtain $\mathscr L_g\gs0$.
  Hence $g$ has also a lower semi-continuous representation, and therefore $G$ is lower semi-continuous.
\end{proof}

Let us recall the one-dimensional model operators $L_{R,l}$ in \cite{bq00}.
Given $R\in \R$ and $l> 1$, the one-dimensional models $L_{R,l}$ are defined as follows: setting $K=R/(l-1)$,\\
\indent (1)\indent  If $R>0$, $L_{R,l}$ defined on $(-\pi/2\sqrt{K}, \pi/2\sqrt{K})$ by
 $$L_{R,l}v(x)=v''(x)-(l-1)\sqrt K\tan (\sqrt Kx)v'(x);$$
 \indent (2)\indent  If $R<0$, $L_{R,l}$ defined on $(-\infty, \infty)$ by
 $$L_{R,l}v(x)=v''(x)+(l-1)\sqrt{- K}\tanh (\sqrt{- K}x)v'(x); $$ and\\
\indent (3)\indent  If $R=0$, $L_{R,l}$ defined on $(-\infty, \infty)$ by
$$L_{R,l}v(x)=v''(x).$$
We refer the readers to \cite{bq00} for the  properties of $L_{R,l}$. \\

The first purpose of this section is to show the following comparison result on the gradients of eigenfunctions, which is an extension of
Kr\"oger's comparison result in \cite{k92}. In smooth case, the proof of this result in \cite{k92, bq00} relies on smooth maximum principle.
For (singular) Alexandrov spaces we need to use a method of upper bound estimate for weak solutions of elliptic equations.
\begin{thm}\label{thm3.2}Let $l\in\R$ and $l\gs n$. Suppose $\lambda_1>\max\{0,\frac{lR}{l-1}\}$.
Let $v$ be a Neumann eigenfunction of $L_{R,l}$ with respect to the same eigenvalue $\lambda_1$ on some interval. If
 $[\min f,\max f]\subset [\min v,\max v]$, then
 $$G:=|\nabla f|^2\ls (v'\circ v^{-1})^2(f).$$
\end{thm}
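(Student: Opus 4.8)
The plan is to mimic Kröger's maximum principle argument, but since we lack a smooth maximum principle and $C^2$-control on $f$, I would instead compare $G = |\nabla f|^2$ with the model quantity through a weak elliptic inequality and then invoke an upper-bound (De Giorgi--Nash--Moser type) estimate for subsolutions. Set $a(t) := (v'\circ v^{-1})^2(t)$, so that $a$ is a smooth function on the interval $[\min v, \max v] \supset [\min f, \max f]$; the eigenfunction equation $L_{R,l} v = -\lambda_1 v$ translates, after changing variables via $v^{-1}$, into an ODE for $a$ of the form $a''(t) P(t) + \dots = $ (something involving $R$, $l$, $\lambda_1$), which encodes precisely the equality case of the improved Bochner inequality \eqref{e2.1} with $f$ replaced by $-\lambda_1 t$ and $|\nabla u|^2$ replaced by $a(t)$. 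The strategy is then to show that the function $H := G - a(f)$ is a subsolution of a suitable linear elliptic equation on the (open) set $\{H > 0\}$, conclude $H \ls 0$.

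The key steps, in order, are as follows. First, I would record the regularity: by Lemma \ref{lem3.1}, $G \in W^{1,2}(M)$ and $G$ is lower semi-continuous; $f$ is Lipschitz; and $a(f) \in W^{1,2}(M) \cap C(M)$ since $a \in C^\infty$ on the relevant interval and $f$ is Lipschitz. Thus $H = G - a(f) \in W^{1,2}(M)$. Second, I would apply Corollary \ref{co2.3} (the improved Bochner formula) to $u = f$, $\,\mathscr{L}_f = -\lambda_1 f \cdot \rv$, with the parameter $l$, on any open set where $G \gs c > 0$; this gives
\begin{equation*}
\mathscr{L}_{G} \gs \Big(\tfrac{2}{l}\lambda_1^2 f^2 - 2\lambda_1 G + 2R\,G + \tfrac{2l}{l-1}\Big(\tfrac{-\lambda_1 f}{l} - \tfrac{\ip{\nabla f}{\nabla G}}{2G}\Big)^2\Big)\cdot\rv.
\end{equation*}
Third, using $\mathscr{L}_{\Phi(f)} = \Phi'(f)\mathscr{L}_f + \Phi''(f)|\nabla f|^2 \cdot \rv = (-\lambda_1 f \Phi'(f) + \Phi''(f) G)\cdot\rv$ applied with $\Phi = a$, I compute $\mathscr{L}_{a(f)}$ exactly. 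Fourth — the crux — I would subtract and use the defining ODE of $a$ to show that on $\{H > 0\}$ (where in particular $G > a(f) \gs 0$, so $G$ is bounded below by a positive constant on relatively compact pieces, legitimizing Corollary \ref{co2.3}), the difference $\mathscr{L}_H$ dominates $\ip{\mathbf{b}}{\nabla H}\cdot\rv + $ (a nonnegative remainder), i.e. $H$ is a weak subsolution of $\mathscr{L}_H - \ip{\mathbf{b}}{\nabla H}\cdot\rv \gs 0$ for a locally bounded vector field $\mathbf{b}$ (built from $\nabla f$, $\nabla G$, $G$). The remainder nonnegativity is exactly where the "$(\tfrac{f}{l} - \dots)^2$" term of Corollary \ref{co2.3} is consumed, matched against the Cauchy--Schwarz defect coming from differentiating $a(f)$ — this is the algebraic identity at the heart of Bakry--Qian's and Kröger's argument. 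Fifth, I would run a weak maximum principle / upper-bound estimate for subsolutions of such drift-Laplace inequalities on Alexandrov spaces (the "method of upper bound estimate for weak solutions of elliptic equations" alluded to in the introduction, available since the canonical Dirichlet form satisfies the needed Sobolev/Poincaré inequalities under $Ric \gs R$): a nonnegative $W^{1,2}$ subsolution on the open set $\{H>0\}$, vanishing on its boundary in the $W^{1,2}$ sense, must be identically zero; hence $H \ls 0$ everywhere, which is the claim.

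The main obstacle I anticipate is the fourth step — verifying that the remainder after subtracting $\mathscr{L}_{a(f)}$ from the lower bound for $\mathscr{L}_G$ is genuinely nonnegative and that the drift $\mathbf{b}$ is locally bounded on $\{H>0\}$. This requires carefully translating the Neumann eigenfunction ODE for $v$ (hence the ODE for $a$) into the correct pointwise inequality, and it is the one place where the choice $\lambda_1 > \max\{0, lR/(l-1)\}$ and $l \gs n$ must be used to keep signs right (e.g. ensuring $a \gs 0$ and that the model is non-degenerate). A secondary technical point is justifying that all the manipulations with $\mathscr{L}$ — the chain rule for $\Phi(f)$, the product rule, and the application of Corollary \ref{co2.3} — are valid as identities/inequalities of signed Radon measures on the (merely open, possibly irregular) set $\{H>0\}$; this is handled by the calculus rules for $\mathscr{L}$ collected in Section 2 together with the lower semi-continuity of $G$ from Lemma \ref{lem3.1}, which is what makes $\{G > a(f)\}$ a legitimate open set on which $G$ stays bounded away from $0$ locally.
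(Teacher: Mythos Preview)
Your overall architecture---Bochner inequality, subtract the model, obtain a differential inequality, then run a De Giorgi--type upper bound argument---matches the paper's. The gap is in step four: defining $H = G - a(f)$ \emph{without a weight} will not yield a differential inequality with the signs you need. The model profile $a = \phi = (v'\circ v^{-1})^2$ satisfies the \emph{equality}
\[
\phi\bigl(-2\lambda_1 - \phi'' + 2R\bigr) + \lambda_1 f\,\phi' + \tfrac{2}{l}\lambda_1^2 f^2 + \tfrac{2l}{l-1}\Bigl(\tfrac{\lambda_1 f}{l} + \tfrac{\phi'}{2}\Bigr)^2 = 0,
\]
so after subtracting $\mathscr L_{\phi(f)}$ from the improved Bochner lower bound for $\mathscr L_G$ there is no automatic margin: the cross terms coming from $\ip{\nabla f}{\nabla G}/G$ versus $\phi'(f)$ produce a zeroth-order coefficient on $H$ that is \emph{not} nonnegative in general. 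The paper (following Bakry--Qian) fixes this by introducing an auxiliary smooth function $h_1$ with $h_1' < \min\{Q_1(h_1),Q_2(h_1)\}$, setting $g' = -(h_1/v')\circ v^{-1}$, $\psi(f) = e^{-g(f)}$, and working with $F$ defined by $\psi(f)\,F = G - \phi(f)$. Only then does one obtain
\[
\mathscr L_F \gs \bigl(\psi(f)\,T_1 F^2 + T_2 F + T_3\ip{\nabla f}{\nabla F}\bigr)\cdot\rv
\]
with $T_1,T_2>0$, which on $\{F>0\}$ can be weakened to $\mathscr L_F \gs -c\,|\nabla F|\cdot\rv$. If you try the computation with $\psi\equiv 1$ (i.e.\ $h_1\equiv 0$) you find $T_1=0$ and $T_2 = Q_1(0)$, which has no definite sign; the ``nonnegative remainder'' you are hoping for is simply not there.

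A second point: your step five is too quick. The inequality you actually obtain is of drift type with no zeroth-order absorption, so a ``subsolution vanishing on the boundary is zero'' principle is not immediately available (and $\{H>0\}$ could be all of $M$, which has no boundary). The paper instead runs a level-set argument: test the inequality $\mathscr L_F \gs -c|\nabla F|\cdot\rv$ against $(F-k)^+$, combine with the Sobolev inequality to get a uniform lower bound $\rv(\Omega_k)\gs C>0$ independent of $k<\sup F$, and let $k\uparrow\sup F$ to contradict $|\nabla F|=0$ a.e.\ on $\{F=\sup F\}$. You should replace your step five with this explicit contradiction argument.
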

\begin{proof}Without loss of generality, we may assume that $$[\min f,\max f]\subset (\min v,\max v).$$

 Denote by $T(x)$ the function such that
  $$L_{R,l}(v)=v''-Tv'.$$
   As in Corollary 3 in Section 4 of \cite{bq00},
  we can choose a smooth bounded function $h_1$ on $[\min f,\max f]$ such that
   $$h'_1<\min\{Q_1(h_1), Q_2(h_1)\},$$
 where $Q_1, \ Q_2$ are given by following
\begin{align*}Q_1(h_1)&:=-(h_1-T)\Big(h_1-\frac{2l}{l-1}T+\frac{2\lambda_1v}{v'}\Big),\\
Q_2(h_1)&:=-h_1\Big(\frac{l-2}{2(l-1)}h_1-T+\frac{ \lambda_1v}{v'}\Big).
\end{align*}
We can then take a smooth function $g$ on $[\min f,\max f]$ such that $g\ls0 $ and $g'=-\frac{h_1}{v'}\circ v^{-1}.$

Now define a function $F$ on $M$ by $$\psi(f) F=G-\phi(f),$$
 where $$\psi(f):=e^{-g(f)}\qquad{\rm and}\qquad\phi(f):=(v'\circ v^{-1})^2(f).$$
  It  suffices to show $F\ls0$ on $M$.

Let us argue by contradiction.  Suppose there exists a positive small number $\epsilon_0$ such that the set $\{x\in M: F(x)\gs \epsilon_0\}$ has   positive measure.

Consider the set $\Omega=\{x\in M:\ F(x)>\frac{\epsilon_0}{2}\}$. By Lemma \ref{lem3.1} and the continuity of $f$, we know that $F$ is lower semi-continuous on $M$,
 hence, $\Omega$ is an open subset in $M$. Without loss of generality, we may assume that $\Omega$ is connected.
 Since $\psi(f)\gs1$ and $\phi(f)\gs0$, we have $G> \frac{\epsilon_0}{2}$ on $\Omega.$

In the calculation below, we write only $\Phi$ instead of $\Phi(f)$ for any function $\Phi$ on $\mathbb R$.

By applying  \eqref{e2.1} to $\mathscr L_f=-\lambda_1 f$, we have
\begin{equation}\label{eq3.3}
\mathscr L_G\gs \Big(-2\lambda_1 G+\frac{2\lambda_1^2}{l}f^2+2RG+\frac{2l}{l-1}\Big(\frac{\lambda_1 f}{l}+\frac{\ip{\nabla f}{\nabla G}}{2G}\Big)^2\Big)\rv.
\end{equation}
Noticing that $\ip{\nabla f}{\nabla G}=\psi'FG+\phi'G+\psi\ip{\nabla f}{\nabla F}$, we get
\begin{align*}\Big(\frac{\lambda_1 f}{l}+\frac{\ip{\nabla f}{\nabla G}}{2G}\Big)^2=&\Big(\frac{\lambda_1 f}{l}+\frac{\phi'}{2}\Big)^2+\frac{\psi'^2F^2}{4}+\frac{\psi^2(\ip{\nabla f}{\nabla F})^2}{4G^2}\\&+\Big(\frac{\lambda_1 f}{l}+\frac{\phi'}{2}\Big)\cdot\Big(F\psi'+\frac{\psi\ip{\nabla f}{\nabla F}}{G}\Big)+\frac{F\psi'\psi}{2G}\ip{\nabla f}{\nabla F}.
\end{align*}
Now $\phi(f)$ the following (see pp. 133 in \cite{bq00})
$$\phi(-2\lambda_1-\phi''+2R)+\lambda_1 f\phi'+\frac{2}{l}\lambda_1^2f^2+\frac{2l}{l-1}\Big(\frac{\lambda_1 f}{l}+\frac{\phi'}{2}\Big)^2=0.$$
Putting these equations to $\mathscr L_{\psi F}=\mathscr L_{G}-\mathscr L_{\phi}$, we have
$$\mathscr L_F\gs \mathcal A\cdot\rv,$$
 where
 \begin{align*}\mathcal A=&\frac{l}{2(l-1)}\frac{\psi'^2}{\psi}F^2-\frac{\psi''}{\psi}FG+\frac{2l}{l-1}\ip{\nabla f}{\nabla F}\Big(\frac{1}{G}(\frac{\lambda_1 f}{l}+\frac{\phi'}{2})+\frac{F\psi'}{2G}\Big)\\ &+\frac{1}{\psi}\Big(\lambda_1 \psi'f+\frac{2l\psi'}{l-1}(\frac{\lambda_1 f}{l}+\frac{\phi'}{2})+(-2\lambda_1+2R-\phi'')\Big)F-2\frac{\psi'}{\psi}\ip{\nabla f}{\nabla F}.
 \end{align*}
Then by substituting  $G=\psi F+\phi$ and $\psi=e^{-g} $ into the above expression, we obtain the following inequality \begin{equation}\label{eq3.2}\mathscr L_F\gs \Big(\psi(f)T_1 \cdot F^2+T_2\cdot F+T_3\ip{\nabla f}{\nabla F}\Big)\rv,
 \end{equation}
 where
 $$v'^2T_1=Q_2(h_1)-h'_1,\qquad T_2=Q_1(h_1)-h'_1$$
 and $$T_3=\frac{2l}{l-1}\Big(-\frac{g'}{2}+\frac{1}{2G}(\frac{2\lambda_1 f}{l}+\phi'+\phi g')\Big)+2g'.$$

Note that  both $T_1$ and $T_2$ are positive, and  both $T_3$ and $|\nabla f|$ are bounded on $\Omega$. It follows from \eqref{eq3.2}
 that  \begin{equation}\label{eq3.4}\mathscr L_F\gs -c|\nabla F|\cdot\rv\end{equation}on $\Omega$ for some constant $c$.

Recall that we have assumed that the set $\{x\in M:\ F(x)\gs \epsilon_0\}$ has positive measure. To get the desired contradiction,
 we only need to show \begin{equation}\label{eq3.5} \sup_\Omega F \ls \frac{\epsilon_0}{2}.\end{equation}

Take any constant $k$ to satisfy $\epsilon_0/2\ls k<\sup_\Omega F$, and set $\phi_k=(F-k)^+$. (If no such $k$ exists, we are done.)
By the definition of domain $\Omega$, we have $\phi_k\in W^{1,2}_0(\Omega).$ From \eqref{eq3.4}, we have
\begin{equation*}\begin{split}
\int_\Omega\ip{\nabla F}{\nabla \phi_k}d\rv&=-\int_\Omega \phi_kd\mathscr L_F\ls c\int_\Omega\phi_k|\nabla F| d\rv\\
&\ls c\Big(\int_{\Omega_k}|\nabla F|^2d\rv\Big)^{1/2}\Big(\int_{\Omega_k}\phi_k^2 d\rv\Big)^{1/2},
\end{split}\end{equation*}
where $\Omega_k={\rm supp}|\nabla\phi_k|\subset{\rm supp}\phi_k\subset \Omega.$
Here we have used the fact that $|\nabla \phi_k|=|\nabla F| $ in ${\rm supp}\phi_k.$ Hence, we have
 \begin{equation}\label{eq3.6}
 \int_{\Omega_k}|\nabla \phi_k|^2d\rv\ls c^2\int_{\Omega_k}  \phi_k^2d\rv.
 \end{equation}
Since $\Omega$ is bounded (by  that $M$ is compact) and $Ric(M)\gs R$, we have the following Sobolev inequality on $\Omega$
(see, for example \cite{kms01} or \cite{zz10-2}):
there exists $\nu>2$ and $C_S=C_S(n,\nu,\Omega)>0$  such that
\begin{equation}\label{sobo}
C_S\Big(\int_\Omega |\psi|^{\nu}d\rv\Big)^{2/\nu}\ls \int_\Omega |\nabla\psi|^2d\rv  ,\qquad \forall \psi\in W^{1,2}_0(\Omega).
\end{equation}
By  combining with \eqref{eq3.6}, we get
\begin{align*}
\|\phi_k\|^2_{L^2(\Omega_k)}&\ls \|\phi_k\|^2_{L^\nu(\Omega_k)}\cdot(\rv(\Omega_k))^{1-2/\nu}\\&\ls C_S^{-1}\int_\Omega |\nabla\phi_k|^2d\rv\cdot(\rv(\Omega_k))^{1-2/\nu}\\
&\ls c^2C_S^{-1}\cdot(\rv(\Omega_k))^{1-2/\nu}\cdot\|\phi_k\|^2_{L^2(\Omega_k)}.
\end{align*}
Thus we deduce that
 $$\rv(\Omega_k)\gs C$$
for some constant $C=C(c,n,\nu,C_S)>0$, which is independent of $k$.
Noting that $\Omega_k\subset {\rm supp}|\nabla F|\cap\{F\gs k\}$ and letting $k$ tend to $\sup_\Omega F$, we have
 $$\rv({\rm supp}|\nabla F|\cap\{F=\sup_\Omega F\})\gs C.$$
This is impossible, since $|\nabla F|=0$ a.e. in $\{F=\sup_\Omega F\}$ (see Proposition 2.22 in \cite{c99}).
Hence the desired   \eqref{eq3.5} is proved. Therefore, we have completed the proof of Theorem \ref{thm3.2}.
\end{proof}

Given $R, l\in \R$ with $l\gs n$ and $\lambda_1>\max\{\frac{lR}{l-1},0\},$ let  $v_{R,l}$ be the solution of the equation
$$L_{R,l}v=-\lambda_1 v$$
with initial value $v(a)=-1$ and $ v'(a)=0$, where
$$a=\begin{cases}-\frac{\pi}{2\sqrt{R/(l-1)}}&\quad {\rm if}\quad R>0,\\0 &\quad {\rm if }\quad  R\ls0.\end{cases}$$
We denote
$$b=\inf\{x>a: \ v'_{R,l}(x)=0\}$$ and  $$m_{R,l}=v_{R,l}(b).$$

The second purpose of this section is to show the following comparison result on the maximum of eigenfunctions.
\begin{thm}\label{th3.3}   Let $M$ be $n$-dimensional Alexandrov space without boundary and $Ric(M)\gs R$.
Suppose that $M$ has the first eigenvalue $\lambda_1$ and  a corresponding eigenfunction $f$.
Suppose $\lambda_1>\max\{0,\frac{nR}{n-1}\}$ and $\min f=-1,$ $\max f\ls 1.$
 Then we have
$$\max f\gs m_{R,n}.$$
\end{thm}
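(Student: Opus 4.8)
The plan is to argue by contradiction following Bakry--Qian: assume $\beta:=\max f<m_{R,n}$ and derive a contradiction by reparametrising $f$ through the one-dimensional model and examining the reparametrised function near a maximum point of $f$. Since $\int_M f\,d\rv=0$ and $f\not\equiv0$ we have $\beta>0>\min f=-1$, so $[-1,\beta]\subsetneq[-1,m_{R,n}]$. From $v_{R,n}'(a)=0$ and $v_{R,n}''(a)=-\lambda_1v_{R,n}(a)=\lambda_1>0$ one sees that $v_{R,n}'>0$ throughout $(a,b)$, hence $v:=v_{R,n}|_{[a,b]}$ is a Neumann eigenfunction of $L_{R,n}$ with eigenvalue $\lambda_1$ whose range $[-1,m_{R,n}]$ contains $[\min f,\max f]$. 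Because $\lambda_1>\max\{0,\tfrac{nR}{n-1}\}$, Theorem~\ref{thm3.2} applies with $l=n$ and this $v$, and yields $|\nabla f|^2\ls(v_{R,n}'\circ v_{R,n}^{-1})^2(f)$ on $M$.

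Now I would set $h:=v_{R,n}^{-1}(f)$, which takes values in $[a,b']$ with $b':=v_{R,n}^{-1}(\beta)<b$. The gradient estimate gives $|\nabla h|^2=\big((v_{R,n}^{-1})'(f)\big)^2|\nabla f|^2\ls1$ a.e., so $h$ is $1$-Lipschitz with $\min h=a$ and $\max h=b'$. On the open set $\{f>-1\}$ --- on which $v_{R,n}^{-1}$ composes smoothly with $f$, since $\max f=\beta<m_{R,n}$ --- the chain rule $\mathscr L_{\Phi(f)}=\Phi'(f)\mathscr L_f+\Phi''(f)|\nabla f|^2\cdot\rv$ with $\Phi=v_{R,n}^{-1}$, together with $\mathscr L_f=-\lambda_1f\cdot\rv$ and the model equation (writing $L_{R,n}w=w''-Tw'$, so $v_{R,n}''=Tv_{R,n}'-\lambda_1v_{R,n}$), gives
\begin{equation*}
\mathscr L_h=\Big(-\lambda_1\,\frac{v_{R,n}(h)}{v_{R,n}'(h)}\,\big(1-|\nabla h|^2\big)-T(h)\,|\nabla h|^2\Big)\cdot\rv .
\end{equation*}

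The last step, which I expect to be the main obstacle, is to contradict $b'<b$. Pick $q\in M$ with $f(q)=\beta$, so $h(q)=\max h=b'$; since $b'<b$ we have $v_{R,n}'(b')>0$, hence near $q$ the coefficients $v_{R,n}(h)$, $1/v_{R,n}'(h)$, $T(h)$ are bounded, and the identity above with $|\nabla h|\ls1$ gives $\mathscr L_h\ls c_1\cdot\rv$ in a neighbourhood of $q$. In the smooth case Bakry--Qian invoke the boundedness of $\mathrm{Hess}\,f$ at the critical point $q$ to control $h$ pointwise near $q$ and thereby force $b'\gs b$; this Hessian bound is not available on an Alexandrov space, and here I would instead feed the one-sided estimate $\mathscr L_h\ls c_1\cdot\rv$ (which near $q$ is morally $\mathscr L_f\gs-\lambda_1\beta\cdot\rv$) into the mean value inequality of Proposition~\ref{pr2.2} applied near $q$ to the nonnegative function $h-a$, and combine the outcome with the detailed analysis of the one-dimensional operators $L_{R,n}$ carried out in \cite{bq00} to reach the contradiction $\beta\gs m_{R,n}$. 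Making this substitution of Proposition~\ref{pr2.2} for the Hessian bound rigorous --- and dealing with the degeneracy of the displayed identity on the level set $\{f=-1\}$ --- is where the real work lies.
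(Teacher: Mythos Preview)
Your proposal has a genuine gap at the final step, and the overall strategy diverges from the paper's in ways that prevent it from closing.

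First, applying Proposition~\ref{pr2.2} to $h-a$ at the \emph{maximum} point $q$ yields nothing: the conclusion reads $\frac{1}{\rv(B_o(r))}\int_{B_q(r)}(h-a)\,d\rv\ls (h(q)-a)+c_2r^2=(b'-a)+c_2r^2$, which is already implied by $h\ls b'$ everywhere. So the mean value inequality at $q$ gives no quantitative control. (And you cannot easily switch to the minimum point with the function $h$, because your identity for $\mathscr L_h$ has a $1/v'(h)$ factor that blows up as $h\to a$, exactly the degeneracy you flag yourself.)

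Second, you take $l=n$, but the paper's argument hinges on choosing $l>n$. The actual proof proceeds as follows: by continuity of $l\mapsto m_{R,l}$, from $\max f<m_{R,n}$ one picks $l>n$ with $\max f\ls m_{R,l}$ and $\lambda_1>\max\{0,\tfrac{lR}{l-1}\}$. One then works at the \emph{minimum} point $p$ (where $f(p)=-1$), applying Proposition~\ref{pr2.2} to $f-f(p)\gs0$ with $\mathscr L_{f-f(p)}\ls\lambda_1\cdot\rv$; this gives a lower bound on the measure of sub-level sets $\{f\ls -1+2C_1r^2\}$ in terms of $\rv(B_p(r))$. Independently, the Bakry--Qian ratio monotonicity (Proposition~5 of \cite{bq00}) bounds $\rv(\{f\ls v(s)\})$ from above by $C\rho(s)v'(s)$, and an ODE analysis of $L_{R,l}$ near $s=a$ turns this into $\rv(B_p(r))\ls C_5r^l$. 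Bishop--Gromov gives $\rv(B_p(r))\gs C_6r^n$, and since $l>n$ these are incompatible for small $r$.

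In short, the three missing ingredients are: the perturbation to $l>n$, the analysis at the minimum (not the maximum) using $f$ itself rather than the reparametrisation $h$, and the invocation of the Bakry--Qian level-set ratio to convert the model ODE behaviour near $a$ into a volume upper bound. Your instinct that Proposition~\ref{pr2.2} replaces the Hessian bound is correct, but it is applied to $f-f(p)$ at $p$, and the contradiction is a volume-growth-rate mismatch rather than a direct comparison of $b'$ with $b$.
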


In smooth case, the proof of this result in \cite{bq00} relies on the fact that the Hessian of $f$ is bounded. Since we are not
sure the existence of the Hessian for an eigenfunction on Alexandrov spaces, we have to give an alternative argument.
\begin{proof}[Proof of Theorem \ref{th3.3}] Let us argue by contradiction. Suppose  $\max f< m_{R,n}.$

Since $m_{R,l}$ is continuous on $l$, we can find some real number $l>n$ such that
$$\max f\ls m_{R,l}\quad {\rm and}\quad \lambda_1>\max\{0,\frac{lR}{l-1}\}.$$
Denote $v=v_{R,l}$. Recall from (the same  proof of) Proposition 5 of \cite{bq00} that the ratio $$R(s)=-\frac{\int_Mf 1_{\{f\ls v(s)\}}d\rv}{\rho(s)v'(s)}$$ is increasing on $[a,v^{-1}(0)]$ and decreasing on $[v^{-1}(0), b]$, where the function $\rho$ is
\begin{equation*}
\rho(s):=\begin{cases} \cos^{l-1}(\sqrt Ks)&\quad {\rm if}\quad K=R/(l-1)>0;\\
s^{l-1}&\quad {\rm if}\quad K=R/(l-1)=0;\\ \sinh^{l-1}(\sqrt{- K}s)&\quad {\rm if}\quad K=R/(l-1)<0.\end{cases}
\end{equation*}

 It follows that for any $s\in[a,v^{-1}(-1/2)]$, we have
\begin{equation}\label{eq3.8}\rv(\{f\ls v(s)\})\ls -2\int_M f1_{\{f\ls v(s)\}}d\rv\ls 2C\rho(s)v'(s),\end{equation}
where $C=R(v^{-1}(0)).$

Take $p\in M$ with $f(p)=-1$. By
 $$f-f(p)\gs0,\quad {\rm and} \quad\mathscr L_{f-f(p)}=-\lambda_1 f\cdot\rv\ls \lambda_1 \cdot\rv.$$
  The mean value inequality, Proposition \ref{pr2.2}, implies that  there exists a constant $C_1$ such that
$$\frac{1}{\rv\big( B_o(r)\subset T_p^{R/(n-1)}\big)}\int_{B_p(r)}(f-f(p))d\rv\ls C_1 r^2$$
for any sufficiently small $r>0$.
Let  $A(r)=\{f-f(p)>2C_1r^2\}\cap B_p(r)$. Then
$$\frac{\rv(A(r))}{\rv(B_p(r))}\ls \frac{\int_{B_p(r)}(f-f(p))d\rv}{2C_1r^2\rv(B_p(r))}\ls \frac{\rv\big( B_o(r)\subset T_p^{R/(n-1)}\big)}{2\rv(B_p(r))}\ls \frac{2}{3}$$
for any sufficiently small $r>0.$ Here we have used the fact
$$\lim_{r\to0^+}\frac{\rv\big( B_o(r)\subset T_p^{R/(n-1)}\big)}{\rv(B_p(r))} =1.$$
Hence
\begin{equation*}\begin{split}\frac{1}{3}\rv(B_p(r))&\ls \rv\big(B_p(r)\backslash A(r)\big)\ls \rv(\{f\ls f(p)+2C_1r^2\})\\&=\rv( \{f\ls -1+2C_1r^2\})
\end{split}\end{equation*}
for any sufficiently small $r>0.$
By combining this with \eqref{eq3.8}, we have
 \begin{equation}\label{eq3.9}\rv(B_p(r))\ls 6C\rho(s)\cdot v'(s)\end{equation}
 for any sufficiently small $r>0$, where
  $$s=v^{-1}\big(-1+2C_1r^2\big)=v^{-1}\big(v(a)+2C_1r^2\big).$$
Rewriting $L_{R,l}v=-\lambda_1v$ as $(\rho v')'=-\lambda_1\rho v$ and noting that $v'(a)=0$,
 we get
 $$(\rho v')(s)=-\lambda_1\int_a^s\rho vdt\quad{\rm and}\quad \frac{v'(s)-v'(a)}{s-a}=-\lambda_1\frac{\int_a^s\rho vdt}{(s-a)\rho(s)}.$$
By applying L'Hospital's rule, we have
 $$v''(a)=-\lambda_1\lim_{s\to a}\frac{\int_a^s\rho vdt}{(s-a)\rho(s)}=-\lambda_1\lim_{s\to a}\frac{ v(s)}{1+(s-a)\rho'(s)/\rho(s)}.$$
Noting that
$$v(a)=-1\qquad {\rm and}\quad \lim_{s\to a}(s-a)\frac{\rho'(s)}{\rho(s)}=l-1,$$
we  get $v''(a)=\lambda_1/l$.
Hence there exists two  constants $C_2$ and $C_3$ such that
 $$0<C_2\ls v''\ls C_3<\infty$$
 in a neighborhood of $a.$ The combination of $v'(a)=0$ and $v''(s)\gs C_2$ implies  that
 \begin{equation}\label{eq3.10}v(s)-v(a)\gs \frac{C_2}{2}(s-a)^2\end{equation}
 for $s$ sufficiently near $a$. On the other hand, the combination of $v'(a)=0$ and
  $0<v''(s)\ls C_3$ implies  that
   \begin{equation*}\label{eq3.11}0\ls v'(s)\ls C_3(s-a)\end{equation*}
  for $s$ sufficiently near $a$. Note that, by the definition of function $\rho$,
  \begin{equation*}\lim_{s\to a^+}\frac{\rho^{\frac{1}{l-1}}(s)}{s-a}=\begin{cases}\sqrt K,& \qquad {\rm if}\quad K=R/(l-1)>0;\\ 1, & \qquad {\rm if}\quad K=R/(l-1)=0;
  \\ \sqrt{-K},&\qquad {\rm if}\quad K=R/(l-1)<0.
  \end{cases}\end{equation*}
  Thus, we have
  $$\rho(s)\ls C'_3(s-a)^{l-1}$$
  for $s$ sufficiently near $a$ and for some constant $C'_3$.
  By combining with $0\ls v'(s)\ls C_3(s-a)$, 
  we have
\begin{equation}\label{eq3.12} \rho(s)v'(s)\ls C_3\cdot C'_3\cdot(s-a)^l:=C_4(s-a)^l\end{equation}
for $s$ sufficiently near $a$.

 The combination of \eqref{eq3.9}, \eqref{eq3.12} and \eqref{eq3.10} implies that
 $$\rv(B_p(r))\ls 6C\cdot C_4(s-a)^l\ls6C\cdot C_4\cdot\Big(\frac{2}{C_2}\big(v(s)-v(a)\big)\Big)^{l/2}$$
 for $s$ sufficiently near $a$.
 Noting that $v(s)-v(a)=2C_1r^2$, we have
  \begin{equation}\label{eq3.13}\rv (B_p(r))\ls 6C\cdot C_4\cdot\Big(\frac{4C_1}{C_2}r^2\Big)^{l/2}:= C_5r^l\end{equation}
for any  sufficiently small $r$.

Fix $r_0>0$. By Bishop--Gromov volume comparison, we have
$$\frac{\rv(B_p(r))}{\mathcal H^n(B^K(r))}\gs \frac{\rv(B_p(r_0))}{\mathcal H^n(B^K(r_0))}$$
for any $0<r<r_0$, where $\mathcal H^n(B^K(r))$ is the volume of a geodesic ball with radius $r$ is $n$-dimensional simply connected space form
with sectional curvature $K$. Thus, there exists a constant $C_6$ such that
\begin{equation}\label{eq3.14}\rv (B_p(r))\gs\frac{\rv(B_p(r_0))}{\mathcal H^n(B^K(r_0))}\cdot\mathcal H^n(B^K(r))\gs C_6 r^n\end{equation}
 for any sufficiently small $r.$

  The combination of \eqref{eq3.13} and \eqref{eq3.14} implies that $C_5\cdot r^{l-n}\gs C_6$ holds for any sufficiently small $r$. Hence, we get $l\ls n$. This contradicts to the assumption  $l>n.$ Therefore, the proof of Theorem \ref{th3.3} is finished.
\end{proof}

\section{comparison theorems on the first eigenvalue and its applications}
In this section, we will prove Theorem \ref{thm1.2} in Introduction and its corollaries.
\begin{proof}[Proof of Theorem \ref{thm1.2}]Without loss of generality, we may assume $K\in\{-1,0, 1\}.$
Let $\lambda_1$ and $f$ be the first non-zero eigenvalue and a corresponding eigenfunction with $\min f=-1$ and $\max f\ls1.$

By Lichnerowicz's estimate, we have $\lambda_1\gs n$, if $K=1.$  Now fix any $R<(n-1)K$, we have $$\lambda_1>\max \{\frac{nR}{n-1},0\}.$$
Then, by   using the above Theorem \ref{th3.3} and Corollary 1 and 2 in Section 3 of \cite{bq00}, we can find an interval $[a,b]$ such that the one dimensional model operator $L_{R,n}$ has the first Neumann eigenvalue $\lambda_1$ and a corresponding eigenfunction $v$ with $\min v=-1$, $\max v=\max f$. Applying Theorem 13 in Section 7 of \cite{bq00}, we have \begin{equation}\label{eq4.1}\lambda_1\gs \lambda_1\big(R/(n-1),n,b-a\big),\end{equation} where $\lambda_1\big(R/(n-1),n,b-a\big)$ is the first non-zero Neumann eigenvalue of $L_{R,n}$ on the symmetric interval $(-\frac{b-a}{2},\frac{b-a}{2}).$

By  Theorem \ref{thm3.2}, we have $$|\nabla (v^{-1}\circ f)|\ls1.$$The canonical Dirichlet form $\mathscr E$ induces a pseudo-metric $$d_\mathscr E(x,y):=\sup\{u(x)-u(y):\ u\in W^{1,2}(M)\cap C(M)\ {\rm and}\ |\nabla u|\ls1 \ a.e.\}.$$
Since $f$ is Lipschitz continuous and $|\nabla (v^{-1}\circ f)|\ls1,$ we have $$b-a=v^{-1}(\max f)-v^{-1}(\min f)\ls \max_{x,y\in M}d_\mathscr E(x,y).$$ On the other hand, Kuwae--Machigashira--Shioya  in \cite{kms01} proved that the induced pseudo-metric $d_\mathscr E(x,y)$ is equal to the origin metric $d(x,y)$. Then $\max_{x,y\in M}d_\mathscr E(x,y)$ is equal to  $d$, the diameter of $M$. By combining this with \eqref{eq4.1}, we have
\begin{equation*}\lambda_1\gs \lambda_1\big(R/(n-1),n,d\big).\end{equation*}
Therefore, Theorem \ref{thm1.2} follows from the combination of this and the arbitrariness of $R.$
\end{proof}

In the rest of this section, we will apply Theorem \ref{thm1.2} to conclude some explicit lower bounds for $\lambda_1$.

 The same computation as in \cite{cw97} gives the following  explicit lower bounds for $\lambda_1(M)$:
\begin{cor}\label{cor4.1}  {\rm(Chen--Wang \cite{cw97})}\indent Let $M$ be a compact $n (\gs 2)$-dimensional Alexandrov space without boundary  and $Ric(M)\gs (n-1)K$. Then its first non-zero eigenvalue $\lambda_1(M)$ satisfies:\\
\indent (1)\indent if $K=1$, then $$\lambda_1(M)\gs \frac{n}{1-\cos^n(d/2)} \qquad {\rm and }\qquad \lambda_1(M)\gs \frac{\pi^2}{d^2}+(n-1)\cdot\max\{\frac{\pi}{4n},1-\frac{2}{\pi}\};$$
\indent (2)\indent if $K=-1$, then $$\lambda_1(M)\gs \frac{\pi^2}{d^2}\cosh^{1-n}(\frac{d}{2})\cdot\sqrt{1+\frac{2(n-1)d^2}{\pi^4}} \qquad {\rm and }\qquad \lambda_1(M)\gs \frac{\pi^2}{d^2}-(n-1)(\frac{\pi}{2}-1);$$
\indent (3)\indent if $K=0$, then $\lambda_1(M)\gs \frac{\pi^2}{d^2},$ $\quad($Zhong--Yang's estimate \cite{zy84}$)$\\
where $d$ is the diameter of $M$.
\end{cor}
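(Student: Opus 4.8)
The plan is to reduce the whole statement to the one--dimensional model and then quote the computations of Chen--Wang \cite{cw97}. Since $Ric(M)\gs (n-1)K$, Theorem \ref{thm1.2} gives at once
$$\lambda_1(M)\gs \lambda_1(K,n,d),$$
so everything is reduced to bounding from below the first non--zero Neumann eigenvalue $\lambda_1(K,n,d)$ of the model operator $v''-(n-1)T(x)v'=-\lambda v$ on $(-d/2,d/2)$ for $K\in\{1,0,-1\}$; these are purely ODE estimates. (For $K=1$ one has $d\ls\pi$ by the Bonnet--Myers type theorem, so the model is well defined.) The case $K=0$ is trivial: there $T\equiv0$, the model is $v''=-\lambda v$ with $v'(\pm\tfrac d2)=0$, whose first non--zero eigenfunction is $\sin(\pi x/d)$, so $\lambda_1(0,n,d)=\pi^2/d^2$; this is part (3) and recovers Zhong--Yang's estimate \cite{zy84}.

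For $K=\pm1$ there is no closed form. I would take the model eigenfunction $v$ on the symmetric interval to be odd and increasing and rewrite the equation in divergence form $(\rho v')'=-\lambda\rho v$, with $\rho=\cos^{n-1}x$ when $K=1$ and $\rho=\cosh^{n-1}x$ when $K=-1$, so that the Neumann condition reads $v'(\pm\tfrac d2)=0$. The first bound in (1) then follows from a short computation: multiplying $(\rho v')'=-\lambda\rho v$ by $\sin x$, integrating over $(-\tfrac d2,\tfrac d2)$ and integrating by parts twice (the boundary terms vanishing by the Neumann condition) yields the identity
$$(\lambda-n)\int_{-d/2}^{d/2}\cos^n x\;v'(x)\,dx=2\lambda\,(\max v)\,\cos^n(d/2),$$
and since $0\ls\cos^n x\ls1$ and $v'\gs0$ we get $\int_{-d/2}^{d/2}\cos^n x\,v'\,dx\ls\int_{-d/2}^{d/2}v'\,dx=2\max v$, which forces $\lambda\gs n/(1-\cos^n(d/2))$ --- this sharpens Lichnerowicz's bound and is an equality $\lambda=n$ at $d=\pi$. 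The remaining explicit bounds (the two bounds for $K=-1$ in (2), and the Zhong--Yang type bound in (1)) come from the same circle of ideas: refined integral estimates for the model ODE, together with Chen--Wang's variational lower bound for $\lambda_1(K,n,d)$ applied to small perturbations of the comparison function $\sin(\pi x/d)$, with the curvature entering as a first--order correction $\pm(n-1)c$ to $\pi^2/d^2$; optimising the perturbation produces the explicit constants $\max\{\pi/(4n),1-2/\pi\}$ for $K=1$ and $\pi/2-1$ for $K=-1$. All of this is carried out in detail in \cite{cw97}.

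I do not expect any genuine obstacle: the Bochner type formula and the comparison Theorem \ref{thm1.2} have already performed the substantive step of passing from the singular space $M$ to the classical one--dimensional model, and from there the argument is word for word that of Chen--Wang. The only point needing attention is the bookkeeping of normalisations --- checking, case by case, that the model operator appearing in Theorem \ref{thm1.2} is exactly the operator whose eigenvalue lower bounds are established in \cite{cw97}, so that those bounds may be quoted unchanged.
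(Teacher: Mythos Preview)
Your proposal is correct and follows exactly the paper's approach: the paper proves this corollary simply by stating that ``the same computation as in \cite{cw97}'' applies once Theorem~\ref{thm1.2} has reduced the problem to the one--dimensional model. You have supplied more detail than the paper does (in particular a correct sketch of the integration--by--parts identity yielding $\lambda\gs n/(1-\cos^n(d/2))$), but the underlying strategy --- invoke Theorem~\ref{thm1.2}, then quote Chen--Wang's ODE estimates --- is identical.
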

A direct computation shows that the first eigenvalue of any $n$-dimensional spherical suspension is exactly $n$. On the other hand,  by combining Corollary \ref{cor4.1} (1) and the maximal diameter theorem for Alexandrov space in \cite{zz10-2}, we  conclude the following Obata type theorem:
 \begin{cor}\label{cor4.2} Let $M$ be a compact $n$-dimensional Alexandrov space  without boundary  and $Ric(M)\gs (n-1)$. Then $\lambda_1(M)=n$ if and only if $M$ is isometric to a spherical suspension over an $(n-1)$-dimensional Alexandrov space  with curvature $\gs1.$\end{cor}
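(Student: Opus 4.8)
The plan is to prove the two implications separately. The easier direction is already handled by the remark preceding the statement: if $M$ is a spherical suspension over an $(n-1)$-dimensional Alexandrov space $\Sigma$ with curvature $\gs 1$, then $M$ has curvature $\gs 1$ in the Alexandrov sense (spherical suspensions preserve lower curvature bounds), hence $Ric(M)\gs n-1$, and a direct computation of eigenfunctions on the suspension—using the warped-product structure $M = [0,\pi]\times_{\sin}\Sigma$ and separation of variables—shows that the coordinate function $x\mapsto \cos(\mathrm{dist}(x,\text{pole}))$ is an eigenfunction with eigenvalue exactly $n$; since $\lambda_1$ can only be smaller, Lichnerowicz's estimate $\lambda_1(M)\gs n$ (valid here because $Ric(M)\gs n-1$) forces $\lambda_1(M)=n$. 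So the substance is the converse.

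For the converse, assume $Ric(M)\gs n-1$ and $\lambda_1(M)=n$. The strategy is to apply the explicit lower bound in Corollary \ref{cor4.1}(1) with $K=1$: we have
$$
n = \lambda_1(M)\gs \frac{n}{1-\cos^n(d/2)},
$$
where $d$ is the diameter of $M$. This inequality forces $1-\cos^n(d/2)\gs 1$, i.e. $\cos^n(d/2)\ls 0$, which (since $0<d\ls\pi$ by Bonnet–Myers for $Ric\gs n-1$, so $0<d/2\ls\pi/2$) is possible only when $\cos(d/2)=0$, that is $d=\pi$. Thus the diameter of $M$ attains the maximal possible value $\pi$. Now invoke the maximal diameter theorem for Alexandrov spaces (\cite{zz10-2}, cited in the excerpt): an $n$-dimensional Alexandrov space with $Ric\gs n-1$ and diameter $\pi$ is isometric to a spherical suspension over an $(n-1)$-dimensional Alexandrov space $\Sigma$ with curvature $\gs 1$. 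This completes the converse.

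The step I expect to be the main obstacle is the "easy" direction—specifically, verifying cleanly on a (possibly singular) spherical suspension that $\lambda_1$ equals $n$ rather than merely being $\ls n$. The upper bound $\lambda_1\ls n$ comes from exhibiting the explicit test function $\cos\circ\,\mathrm{dist}(\cdot,\text{north pole})$, whose Rayleigh quotient must be computed against the Hausdorff measure of the suspension (which disintegrates as $\sin^{n-1}(t)\,dt\,d\mathcal H^{n-1}_\Sigma$); one must also check this function has mean zero and lies in $Lip(M)$, both of which are routine. The matching lower bound $\lambda_1\gs n$ is then immediate from Lichnerowicz's estimate on Alexandrov spaces with $Ric\gs n-1$ (established in \cite{zz10-3}, as noted in the excerpt, or via \cite{lv07}), once one confirms $Ric(M)\gs n-1$, which follows from the fact that the spherical suspension of a space with curvature $\gs 1$ again has curvature $\gs 1$, hence $Ric\gs n-1$. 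No genuinely new analysis is needed beyond assembling Corollary \ref{cor4.1}(1) and the maximal diameter theorem; the only care required is the bookkeeping on the warped-product measure in the forward direction.
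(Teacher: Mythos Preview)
Your proposal is correct and follows essentially the same approach as the paper: for the nontrivial direction you use the first explicit bound in Corollary~\ref{cor4.1}(1) to force $d=\pi$ and then invoke the maximal diameter theorem, while for the forward direction you spell out the ``direct computation'' that the paper merely asserts. The only minor addition you make is the Bonnet--Myers observation $d\ls\pi$ to rule out $\cos(d/2)<0$, which the paper leaves implicit.
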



In the end of this section, we give some explicit lower bounds of $\lambda_1(M).$
\begin{cor}\label{cor4.3}   Let $M$ be a compact $n (\gs 2)$-dimensional Alexandrov space without boundary  and $Ric(M)\gs (n-1)K$. Then its  first non-zero eigenvalue $\lambda_1(M)$ satisfies
$$\lambda_1(M)\gs 4s(1-s)\frac{\pi^2}{d^2}+s(n-1)K $$for all $s\in (0,1)$,
where $d$ is the diameter of $M$.
\end{cor}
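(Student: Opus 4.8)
The plan is to combine Theorem~\ref{thm1.2} with an analysis of the one-dimensional model. By Theorem~\ref{thm1.2} we have $\lambda_1(M)\gs\lambda_1(K,n,d)$, so it suffices to prove the one-dimensional inequality $\lambda_1(K,n,d)\gs 4s(1-s)\pi^2/d^2+s(n-1)K$ for each $s\in(0,1)$. Put $R=(n-1)K$ and $\mu=\lambda_1(K,n,d)$; note first that $\mu>sR$ (trivial when $K\ls0$, and a consequence of the Lichnerowicz estimate $\mu\gs nK$ for the model, via Bonnet--Myers, when $K>0$; see \cite{bq00}). Let $v$ be a first nonzero Neumann eigenfunction of $v''-(n-1)Tv'=-\mu v$ on $(-\tfrac d2,\tfrac d2)$. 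Since $T$ is odd, $v$ may be taken odd with $\max v=1$; writing $\rho>0$ for the even weight with $(\rho v')'=-\mu\rho v$, the fact that $(\rho v')'=-\mu\rho v<0$ on $(0,\tfrac d2)$ while $(\rho v')(\tfrac d2)=0$ forces $v'>0$ on $[0,\tfrac d2)$, so $v$ is an increasing bijection of $[0,\tfrac d2]$ onto $[0,1]$ with $v(0)=0$, $v(\tfrac d2)=1$, $v'(\tfrac d2)=0$. Throughout I use the Riccati identity $T'=T^2+K$, which holds in all three cases.

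The heart of the matter is the pointwise bound
$$4s(1-s)\big(v'(x)\big)^2\ls(\mu-sR)\big(1-v(x)^2\big)\qquad(0\ls x\ls\tfrac d2).$$
Granting it, the monotonicity of $v$ yields
$$\frac d2=\int_0^{d/2}\!dx=\int_0^1\frac{dv}{v'}\gs\int_0^1\frac{2\sqrt{s(1-s)}\,dv}{\sqrt{(\mu-sR)(1-v^2)}}=\frac{\pi\sqrt{s(1-s)}}{\sqrt{\mu-sR}},$$
hence $\mu-sR\gs 4s(1-s)\pi^2/d^2$, which is exactly the asserted one-dimensional inequality; combined with Theorem~\ref{thm1.2} this proves Corollary~\ref{cor4.3}. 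To establish the pointwise bound I would consider $\Phi:=4s(1-s)(v')^2+(\mu-sR)v^2$, which satisfies $\Phi(\tfrac d2)=\mu-sR$, and show $\Phi\ls\mu-sR$ on $[0,\tfrac d2]$ by a maximum principle: if $\Phi$ attained an interior maximum at $x_0$ with $\Phi(x_0)>\mu-sR$, then $v'(x_0)>0$ (else $\Phi(x_0)=(\mu-sR)v(x_0)^2\ls\mu-sR$), and the conditions $\Phi'(x_0)=0$, $\Phi''(x_0)\ls0$, together with the eigenfunction equation, $T'=T^2+K$, the identity $R=(n-1)K$, and $\mu\gs nK$, should lead to a contradiction. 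Should the plain $\Phi$ not suffice, I would add a Zhong--Yang type correction $(\mu-sR)z(v)$ with $z(\pm1)=0$ chosen so that the second-order test closes. (Alternatively, the one-dimensional inequality for $\lambda_1(K,n,d)$ may be extractable from the detailed study of the models $L_{R,n}$ in \cite{bq00}.)

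The interior-maximum computation is the step I expect to be the main obstacle. When $K=0$ nothing is needed: there $(v')^2=\mu(1-v^2)$, so $\Phi=\mu[\,4s(1-s)+v^2(1-2s)^2\,]\ls\mu=\mu-sR$, and the whole one-dimensional inequality collapses to $4s(1-s)\ls1$; the same plain $\Phi$ also works for the round model $K=1$, $d=\pi$. For general $K$ and $s\ne\tfrac12$, however, the sign analysis in $\Phi''(x_0)\ls0$ is delicate --- roughly, the ``$v^2$'' and the ``$(v')^2$'' contributions to $\Phi''$ have opposite signs precisely when $\mu(1-2s)^2<sR$ --- and, exactly as in Zhong--Yang's treatment of the sharp nonnegatively-curved case, closing the argument will likely require the extra lower-order term $z(v)$ and a somewhat lengthy verification of the attendant differential inequality, using only the eigenfunction ODE and $T'=T^2+K$.
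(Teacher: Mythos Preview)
Your reduction to the one-dimensional model via Theorem~\ref{thm1.2} is correct and matches the paper. But the proposal has a genuine gap: the pointwise bound $4s(1-s)(v')^2\ls(\mu-sR)(1-v^2)$ is not proven. You yourself flag the interior-maximum computation as ``the main obstacle'' and offer only a conditional plan---possibly requiring a Zhong--Yang style correction $z(v)$---without carrying it out. Since this bound is the entire content of the one-dimensional step, what you have written is a strategy, not a proof; and for general $K$ and $s\neq\tfrac12$ it is not clear the plain $\Phi$ works at all, so the gap is real.

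The paper bypasses any pointwise estimate and gives a short integral argument instead. Writing $D=d/2$, $f=v'$ and $F=-(n-1)T$, differentiating the model ODE yields $-f''=F'f+Ff'+\lambda_1(K,n,d)\,f$ with $f(\pm D)=0$. Multiplying by $f^{a-1}$ (for $a>1$, $s=1-\tfrac1a$) and integrating by parts twice gives
\[
\frac{4(a-1)}{a^2}\int_{-D}^{D}\big[(f^{a/2})'\big]^2\,dx
=\int_{-D}^{D}\Big(\lambda_1(K,n,d)+\big(1-\tfrac1a\big)F'\Big)f^{a}\,dx.
\]
Since $f^{a/2}(\pm D)=0$, Wirtinger's inequality bounds the left side below by $4s(1-s)(\pi/d)^2\int f^{a}$, while $\max_{(-D,D)}F'=-(n-1)K$ (from $T'=T^2+K\gs K$) controls the right side. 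Rearranging gives $\lambda_1(K,n,d)\gs4s(1-s)\pi^2/d^2+s(n-1)K$ directly, with no maximum principle and no correction term.
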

\begin{proof}
When $K>0$, we can assume $d<\pi/\sqrt{K}.$ Otherwise, $$\lambda_1(M)=nK=K+(n-1) K\gs 4s(1-s)\frac{\pi^2}{d^2}+s(n-1)K$$for all $s\in(0,1)$.

Denote by $$D=\frac{d}{2},\qquad f=v'\qquad {\rm and}\qquad  F=-(n-1)T,$$ where $v$ and $T$ are the one variable functions in Theorem \ref{thm1.1}. Clearly, we have
$$-f''=F'f+Ff'+\lambda_1(K,n,d)\cdot f$$ and $f(\pm D)=0,$ $f(x)>0$ on $x\in(-D,D).$

For any $a>1$,  by multiplying  $f^{a-1}$ and integrating over $(-D,D)$, we get
\begin{equation}\label{eq4.2}-\int^D_{-D}f^{a-1}f''dx=\int_{-D}^D(\lambda_1(K,n,d)+F')f^adx+\int_{-D}^DFf^{a-1}f'dx.\end{equation}
Next, by $f(\pm D)=0$, we have $$-\int^D_{-D}f^{a-1}f''dx=(a-1)\int^D_{-D}f^{a-2}f'^2dx=\frac{4(a-1)}{a^2}\int^D_{-D}[(f^{a/2})']^2dx.$$
On the other hand, by $f(\pm D)=0$ again, we have\begin{align*}\int_{-D}^DFf^{a-1}f'dx&=-\int_{-D}^Df\big(F'f^{a-1}+(a-1)Ff^{a-2}f'\big)dx\\
&=-\int_{-D}^D  F'f^{a}dx-(a-1)\int_{-D}^DFf^{a-1}f'dx.\end{align*}
Hence, we have $$\int_{-D}^DFf^{a-1}f'dx=-\frac{1}{a}\int_{-D}^D  F'f^{a}dx.$$
Putting these equations to \eqref{eq4.2}, we have
$$\frac{4(a-1)}{a^2}\int^D_{-D}[(f^{a/2})']^2dx=\int_{-D}^D\big(\lambda_1(K,n,d)+(1-\frac{1}{a})F'\big)f^adx.$$
Letting $s=1-\frac{1}{a}\in(0,1)$, we get
\begin{align*}4s(1-s) \int^D_{-D}[(f^{a/2})']^2dx&=\int_{-D}^D\big(\lambda_1(K,n,d)+sF'\big)f^adx\\&\ls\big(\lambda_1(K,n,d)+s\max_{x\in(-D,D)} F'\big)\int_{-D}^D f^adx.\end{align*}
Since $f^{a/2}(\pm D)=0$, by Wirtinger's inequality, we have $$4s(1-s)\Big(\frac{\pi}{2D}\Big)^2\ls \lambda_1(K,n,d)+s\max_{x\in(-D,D)} F'.$$
Note also that $$ \max_{x\in(-D,D)} F'=-(n-1)\min_{x\in(-D,D)} T'=-(n-1)K.$$
Therefore, by applying Theorem \ref{thm1.2}, we get the desired estimate.
\end{proof}
\begin{rem}
\indent(1)\indent If let $s=\frac{1}{2}$, we get $$\lambda_1(M)\gs \frac{\pi^2}{d^2}+\frac{1}{2}(n-1)K.$$
This improves  Chen--Wang's result in both $K>0$ and $K<0$. It also improves Ling's recent results in \cite{ling07}. \\
\indent(2)\indent If $K>0$, Peter Li conjectures that $\lambda_1(M)\gs \pi^2/d^2+(n-1)K$. Corollary \ref{cor4.3} implies that $\lambda_1(M)\gs \frac{3}{4}\big(\pi^2/d^2+(n-1)K\big).$\\
\indent(3)\indent If $n\ls 5$ and $K>0$, by choosing some suitable constant $s$, we have $$\lambda_1(M)\gs \frac{\pi^2}{d^2}+\frac{1}{2}(n-1)K+\frac{(n-1)^2K^2d^2}{16\pi^2}.$$
\end{rem}

\section{Li--Yau's parabolic estimates}
In this section, we consider heat equations on Alexandrov spaces.

Let $M$ be an $n$-dimensional compact Alexandrov space without boundary and let  $\mathscr E: W^{1,2}(M)\times W^{1,2}(M)\to \R$ be  the canonical Dirichlet energy.   Associated with the Dirichlet form $(\mathscr E, W^{1,2}(M))$, there exists an infinitesimal generator $\Delta$ which acts on a dense subspace $\mathbf D(\Delta)$ of $W^{1,2} (M)$, defined by
$$\int_Mg\Delta fd\rv=-\mathscr E(f,g),\qquad \forall \, f\in \mathbf D(\Delta)\ {\rm and}\ g\in W^{1,2}(M).$$
By the definition, it is easy to check that  $f\in \mathbf D(\Delta)$ implies $ \mathscr L_f=\Delta f\cdot \rv$.

By   the general theory of analytic semigroups (see for example \cite{en99}), the operator $\Delta$ generates an analytic semigroup $(T_t)_{t\gs0}$  on $ L^2(M).$
For any $f \in  L^2(M)$, $u(x,t):=T_tf(x)$ solves the (linear)  heat equation $$\frac{\partial}{\partial t}u(x,t) = \Delta u(x,t) $$
with initial value $u(x,0)=f(x)$ in the sense that\\
 \indent (1)\indent$T_tf\to f $ in $L^2(M)$, as $t\to0$;\\
 \indent (2)\indent $T_tf\in \mathbf D(\Delta)$ and $\frac{\partial}{\partial t}T_tf=\Delta T_tf $ for all $t>0$.\\
Moreover, $T_tf$ satisfies the following properties (see, for example \cite{en99})\\
\indent (3)\indent $T_tf\in D((\Delta)^m),$ for all $t > 0\ {\rm and\ all }\  m\in\mathbb N.$
If $f\in D((\Delta)^m)$, then $$(\Delta)^mT_tf=T_t(\Delta)^mf,\qquad \forall t\gs0.$$

The existence of the heat kernel was proved in \cite{kms01}. More precisely,  there exists a unique, measurable, nonnegative,
and locally H\"older continuous function  $p_t(x, y)$ on $(0,\infty)\times M \times M$ satisfying the following properties (i)--(iii):\\
\indent (i)\indent For any $f\in L^2(M)$, $x\in M$ and $t>0$, \begin{equation}\label{eq5.1}T_tf(x)=\int_M  p_t(x,y)f(y)d\rv(y);\end{equation}
\indent (ii)\indent For any $s, t > 0$ and $x, y \in M$, we have $$p_t(x,y)=p_t(y,x)>0,$$ $$p_{t+s}(x,y)=\int_M p_t(x,z) p_s(z,y)d\rv(z),$$
$$\int_M p_t(x,y) d\rv(y)=1,$$the last equality
follows from the fact that $M$ is compact;\\
\indent (iii)\indent  Denote by $0<\lambda_1\ls \lambda_2\ls \cdots$ all
the non-zero eigenvalues of $\Delta$  with multiplicity and by $\{\phi_j\}_{j=1}^\infty$ the sequence
of associated eigenfunctions which is a complete orthonormal basis of
$W^{1,2}(M)$ and $\|\phi_j\|_{L^2(M)}=1$ for all $j\in\mathbb N$. Then we have
\begin{equation}\label{eq5.2}p_t(x,y)=\frac{1}{\rv(M)} +\sum^\infty_{j=1}e^{-\lambda_jt}\phi_j(x)\phi_j(y)\end{equation}for all $t>0$ and $x,y\in  M$. Moreover, Cheng--Li proved that Sobolev inequality \eqref{sobo} implies that $$\lambda_j\gs C\cdot (j+1)^{\frac{\nu-2}{\nu}}$$for some constant $C=C(n,\nu,M, C_S)>0$ (see, for example, Section 3.5 in book \cite{sy94}).

\begin{lem}\label{lem5.1}Let $M$ be  an $n$-dimensional Alexandrov space  with $\partial M=\varnothing$. Then for any $ f\in L^2(M)$, $t>0$ and $m\in \mathbb N$, $(\Delta)^mT_tf$ is Lipschitz continuous in $M$.
\end{lem}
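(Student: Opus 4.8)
The plan is to exploit the smoothing property of the analytic semigroup, which for $t>0$ sends $L^2(M)$ into the domain of every power of the generator, and then to upgrade the $L^2$-regularity of these iterates to Lipschitz regularity by elliptic bootstrapping together with the Poisson-equation regularity of \cite{zz10-3}. Set $g:=(\Delta)^mT_tf$. By property (3) of $(T_t)$ recorded above, $T_tf\in\mathbf D((\Delta)^{m+k})$ for every $k\in\mathbb N$, so
$$w_k:=(\Delta)^kg=(\Delta)^{m+k}T_tf\in L^2(M),\qquad k=0,1,2,\dots,$$
and by the definition of $\mathbf D(\Delta)$ each $w_k$ is a weak solution of $\mathscr L_{w_k}=w_{k+1}\cdot\rv$ on $M$. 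It therefore suffices to prove the following: if $w_0\in W^{1,2}(M)$ and $(\Delta)^kw_0\in L^2(M)$ for all $k\gs0$, then $w_0$ is Lipschitz on $M$.

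For the bootstrap, note first that $w_k\in\mathbf D(\Delta)\subset W^{1,2}(M)$ and
$$\int_M|\nabla w_k|^2\,d\rv=-\int_M w_k\,w_{k+1}\,d\rv\ls\|w_k\|_{L^2(M)}\,\|w_{k+1}\|_{L^2(M)}<\infty,$$
so the $W^{1,2}(M)$-norms of all the $w_k$ are finite. Since $M$ is compact and $Ric(M)\gs R$, the Sobolev inequality on $M$ (cf. \eqref{sobo}) yields $w_k\in L^\nu(M)$ for every $k$. Now $w_k$ solves $\mathscr L_{w_k}=w_{k+1}\cdot\rv$ with right-hand side in $L^\nu(M)$ and $\nu>\nu/2$; as $M$ carries a doubling measure and a local Poincaré inequality, the De Giorgi--Nash--Moser local boundedness estimate applies (see \cite{kms01}), giving $w_k\in L^\infty(M)$, and then $w_k$ is locally H\"older continuous, for every $k\gs0$.

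In particular $w_0=g=(\Delta)^mT_tf$ is a bounded weak solution of $\mathscr L_g=w_1\cdot\rv$ whose right-hand side $w_1=(\Delta)^{m+1}T_tf$ is bounded and continuous on $M$. By the regularity theory for Poisson equations on Alexandrov spaces with Ricci curvature bounded below developed in \cite{zz10-3} (the same circle of ideas as Corollary 5.5 there, invoked in the proof of Corollary \ref{co2.2}), a weak solution with such a right-hand side is locally Lipschitz continuous; compactness of $M$ then gives that $g$ is Lipschitz on $M$, which is the assertion.

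The delicate point is the last step. On a smooth manifold $\Delta g\in L^\infty$ already forces $g\in C^{1,\alpha}$ by Calder\'on--Zygmund/Schauder estimates, but on a singular space one can only hope for Lipschitz regularity, and one cannot circumvent this by the induction ``$w_1$ Lipschitz $\Rightarrow g$ Lipschitz, $w_2$ Lipschitz $\Rightarrow w_1$ Lipschitz, $\dots$'' because it never terminates. The argument works precisely because the Poisson regularity of \cite{zz10-3} converts a merely bounded (or H\"older) right-hand side into a Lipschitz solution; the sole purpose of the preceding bootstrap is to put $w_1$ into that class. Were the available regularity statement to require a Lipschitz right-hand side, one would instead have to break the induction by a direct gradient estimate for $T_tf$ extracted from the Bochner type formula of \cite{zz10-3}. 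A secondary technical point is checking that the De Giorgi--Nash--Moser iteration is licit on $M$, which follows from volume doubling and the Poincaré inequality under $Ric(M)\gs R$.
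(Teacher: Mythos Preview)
Your elliptic bootstrap is a natural idea, but the decisive step does not close with the tools actually available in \cite{zz10-3}. As the paper itself records in the proof of Corollary~\ref{co2.2}, Corollary~5.5 of \cite{zz10-3} yields local Lipschitz continuity of a Poisson solution \emph{under the hypothesis that the right-hand side is Lipschitz}. Your bootstrap brings $w_1=(\Delta)^{m+1}T_tf$ only up to H\"older continuity (via De~Giorgi--Nash--Moser), not to Lipschitz; hence the invocation of ``Poisson regularity of \cite{zz10-3} converts a merely bounded (or H\"older) right-hand side into a Lipschitz solution'' is not justified by that reference, and your own caveat in the last paragraph is exactly the live obstruction. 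The induction ``$w_{k+1}$ Lipschitz $\Rightarrow w_k$ Lipschitz'' never gets started because no $w_k$ is known to be Lipschitz at the outset.

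The paper sidesteps this entirely with a \emph{parabolic} rather than elliptic argument: since $\Delta$ commutes with $T_t$, the function $u(x,t):=(\Delta)^mT_tf(x)$ is itself a solution of the heat equation $\partial_tu=\Delta u$ (equivalently, for any $t>0$ one has $(\Delta)^mT_tf=T_{t/2}\big((\Delta)^mT_{t/2}f\big)$ with $(\Delta)^mT_{t/2}f\in L^2(M)$). It then suffices to know that every heat solution is Lipschitz in $x$ for positive time, which follows from the Lipschitz regularity of the heat kernel $p_t(\cdot,y)$ established as in Theorem~5.14 of \cite{zz10-2} (or Theorem~4.4 of \cite{gko10}) together with the representation \eqref{eq5.1}. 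This one-line reduction to heat-kernel regularity replaces your entire bootstrap and avoids any need for a Lipschitz-to-Lipschitz Poisson estimate.
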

\begin{proof}For any  $m\in \mathbb N$,  we have $T_tf\in D((\Delta)^m) $ for all $t > 0.$ This concludes that $(\Delta)^mT_tf$ is a solution of equation
$$\frac{\partial}{\partial t}u(x,t)=\Delta u(x,t).$$

On the other hand,  the same proof of the locally Lipschitz continuity of Dirichlet heat kernel on a bounded domain (Theorem 5.14 in  \cite{zz10-2}) proves that $p_t(\cdot,y)$ is Lipschitz continuous on $M$, for any $y\in M$. From \eqref{eq5.1}, we get that any solution of equation $\frac{\partial}{\partial t}u(x,t)=\Delta u(x,t)$ is  Lipschitz continuous on $M$ (or see Theorem 4.4 in \cite{gko10}). Therefore, the proof of the lemma is completed.
\end{proof}

Next, we adapt Bakry--Ledoux's method in \cite{bl06} to prove Theorem \ref{thm1.4}.
\begin{proof}[Proof of Theorem \ref{thm1.4}]Fix any small $\epsilon>0$ and set  $f=u(\cdot,\epsilon)$. Since $u$ is positive and continuous on $M$, we know that $f$ is bounded by a positive constant from below. Without loss of generality, we may assume that $f>1$.

The combination of equation \eqref{eq5.1} and
 \begin{equation*} \int_M p_t(x,y) d\rv(y)=1  \end{equation*}
 implies $T_tf>1$ for all $t>0$. By the definition of $\mathbf D(\Delta)$, \eqref{eq5.1} and $f\gs1, \ T_tf\gs1$, we have  $\log f, \log T_tf\in \mathbf D(\Delta)$ for all $t>0.$

Fix $t>0$ and, as in \cite{bl06}, let us consider the function $$\psi(s)=T_s\Big(T_{t-s}f\cdot|\nabla \log T_{t-s}f|^2\Big), \qquad 0\ls s\ls t.$$
Setting $g_s =\log T_{t-s} f$, by $\frac{\partial}{\partial s}T_{t-s}f=-\Delta T_{t-s}f$, we have  $$\psi(s,x)=\int_M p_s(x,y)\cdot\exp g_s(y)\cdot|\nabla g_s(y)|^2dy,$$ $$\exp g_s\cdot\frac{\partial}{\partial s}g_s=-\Delta \exp g_s$$  and
\begin{equation}\label{eq5.3}\begin{split}\frac{\partial}{\partial s}\psi(s,x) &=\int_M \frac{\partial}{\partial s}p_s(x,y)\cdot\exp g_s(y)\cdot|\nabla g_s(y)|^2dy\\
 &\qquad +\int_M p_s(x,y)\cdot \frac{\partial}{\partial s}g_s(y)\cdot\exp g_s(y)\cdot|\nabla g_s(y)|^2dy\\
 &\qquad +\int_M p_s(x,y)\cdot\exp g_s(y)\cdot2\ip{\frac{\partial}{\partial s}\nabla g_s(y)}{\nabla g_s(y)}dy\\
 &=\int_M \Delta p_s(x,y)\cdot\exp g_s(y)\cdot|\nabla g_s(y)|^2dy\\&\qquad-
 T_s\Big(\Delta  \exp g_s \cdot|\nabla g_s |^2\Big)-T_s\Big(\exp g_s \cdot2\ip{\nabla (\Delta g_s +|\nabla g_s |^2)}{\nabla g_s }\Big).\end{split}
\end{equation}
By the definition of $\Delta$ and the functional $\mathscr L$, and $\exp g\in \mathbf D(\Delta)$, we have
\begin{align*}\int_M\Delta p_s(x,y)&\cdot\exp g_s(y)\cdot|\nabla g_s(y)|^2dy\\
&=\int_M p_s(x,y) d\mathscr L_{(\exp g_s(y)\cdot|\nabla g_s(y)|^2)}\\
&=T_s\Big(\Delta  \exp g_s \cdot|\nabla g_s |^2\Big)+2T_s(\ip{\nabla \exp g_s}{\nabla |\nabla g_s|^2})\\
&\quad+\int_M p_s(x,y)\exp g_s(y)d\mathscr L_{|\nabla g_s|^2}.
\end{align*}
From Lemma \ref{lem5.1} and $T_{t-s}f\gs1$, we get that  $\Delta T_{t-s}f$ is Lipschitz continuous on $M$, for all $0<s<t$. Note that   $g_s\in \mathbf D(\Delta)$.   Now, because $M$ has nonnegative Ricci curvature, we can apply Corollary 2.2 (Bochner type formula) to equation
$$\mathscr L_{T_{t-s}f}=\Delta T_{t-s}f\cdot\rv$$ and
 function $\Phi(t)=\log t$ to conclude that
\begin{equation}\label{eq5.4}\mathscr L_{|\nabla g_s|^2}\gs \Big(\frac{2(\Delta g_s)^2}{n}+2\ip{\nabla g_s}{\nabla \Delta g_s} \Big)\cdot \rv.\end{equation}
Putting these  above  equations and   the nonnegativity of $p_t(x,y)$ to \eqref{eq5.3}, we have \begin{equation}\label{eq5.5}\frac{\partial}{\partial s}\psi(s,x)\gs \frac{2}{n}T_s\Big(\exp g_s\cdot(\Delta g_s)^2\Big).
\end{equation}

 The rest of the proof follows exactly from the corresponding argument in \cite{bl06}.

From \eqref{eq5.5}, we have $\psi(0)\ls\psi(t)$, i.e.,
$$T_tf\cdot|\nabla\log T_tf|^2\ls T_t\big(f|\nabla \log f|^2\big),\qquad \forall t>0.$$
Since $T_tf\in \mathbf D(\Delta)$ for all $t>0$, the above inequality  implies   \begin{equation} \label{eq5.6}T_tf\cdot \Delta(\log T_tf)\gs T_t\big(f\Delta(\log f)\big),\qquad \forall t>0 .\end{equation}

By applying
$$\Delta g_s=\frac{\Delta T_{t-s}f}{T_{t-s}f}-|\nabla g_s|^2$$
 and Cauchy--Schwarz inequality, we have
\begin{equation*}\begin{split}
T_s(\exp g_s(\Delta g_s)^2)&=T_s\Big(T_{t-s}f\Big(\frac{\Delta T_{t-s}f}{T_{t-s}f}-|\nabla g_s|^2\Big)^2\Big)\\
&=T_s\Big(\frac{\big(\Delta T_{t-s}f-T_{t-s}f|\nabla g_s|^2\big)^2}{T_{t-s}f}\Big)\\
&\gs \Big(T_s\big(\Delta T_{t-s}f-T_{t-s}f|\nabla g_s|^2\big)\Big)^2\Big/T_s(T_{t-s}f)\\
&=(\Delta T_tf-\psi(s))^2/T_tf.
\end{split}\end{equation*}
Putting this into the equation \eqref{eq5.5}, we get
  $$\big(\psi (s)-\Delta T_tf\big)'\gs \frac{2}{nT_tf}\big(\psi (s)-\Delta T_tf\big)^2.$$
 This implies \begin{equation}\label{eq5.7}\varphi(s_2)-\varphi(s_1)\gs \frac{2}{nT_tf}(s_2-s_1)\cdot\varphi(s_2)\cdot\varphi(s_1),\end{equation} for all $0\ls s_1<s_2\ls t,$ where
 $$\varphi(s)=\psi (s)-\Delta T_tf.$$
 In particular, we have $$\varphi(t)-\varphi(0)\gs\frac{2t}{nT_tf}\varphi(t)\varphi(0).$$
 That is,
\begin{equation}\label{eq5.8}
-\varphi(0)\gs\frac{2t}{nT_tf}\varphi(t)\varphi(0)-\varphi(t)=-\varphi(t)\cdot\Big(1-\frac{2t}{nT_tf}\varphi(0)\Big).\end{equation}
Note that
$$\varphi(0)=-T_tf\Delta(\log T_tf), \qquad \varphi(t)=-T_t(f\Delta(\log  f)).$$
Hence, by the equation \eqref{eq5.8}, we have
\begin{equation}\label{eq5.9}T_tf\cdot \Delta(\log T_tf)\gs T_t\big(f\Delta(\log f)\big)\Big(1+\frac{2t}{n}\Delta(\log  T_tf)\Big)
\end{equation}for all $t>0.$

We now claim that
 \begin{equation}\label{eq5.10}1+\frac{2t}{n}\Delta(\log  T_tf)\gs 0,\qquad \forall t>0.
 \end{equation}
 Fix any $t>0$. Indeed, if $\Delta(\log  T_tf)\gs0$, we are done. Then we may assume that $\Delta(\log  T_tf)<0$. Since $T_tf\gs1>0$, the equation \eqref{eq5.6}
 implies
 $$T_t(f\Delta(\log f))<0.$$
  Now the equation \eqref{eq5.9} shows that
  $$1+\frac{2t}{n}\Delta(\log  T_tf)\gs\frac{T_tf\cdot \Delta(\log T_tf)}{T_t\big(f\Delta(\log f)\big)}\gs 0.$$
This proves the equation \eqref{eq5.10}.

 Rewriting the equation \eqref{eq5.10}, we get
$$|\nabla\log  T_tf|^2 -\frac{\partial}{\partial t}\log T_tf\ls \frac{n}{2t}\qquad \forall t>0.$$
Noting that $T_tf(x)=u(x,t+\epsilon)$, we get $$|\nabla\log  u|^2 -\frac{\partial}{\partial t}\log u\ls \frac{n}{2(t-\epsilon)}\qquad \forall t>\epsilon.$$The desired inequality \eqref{eq1.1} follows from the arbitrariness of $\epsilon$. Therefore,  the proof of Theorem \ref{thm1.4} is completed.
\end{proof}
Since $u(x,t)$ is continuous in $M\times(0,\infty)$, a direct application of Theorem \ref{thm1.4}, as in \cite{ly86}, gives the following parabolic Harnack inequality.
\begin{cor}\label{cor5.4}Let $M$ be a compact  $n$-dimensional Alexandrov space with nonnegative Ricci curvature and $\partial M=\varnothing$. Assume that $u(x,t)$ is a positive solution of heat equation $\frac{\partial}{\partial t}u=\Delta u $ on $M\times [0,\infty)$. Then we have
\begin{equation*}   u(x_1,t_1)\ls  u(x_2,t_2)\Big(\frac{t_2}{t_1}\Big)^{\frac{n}{2}}\exp\Big(\frac{|x_1x_2|^2}{4(t_2-t_1)}\Big) \end{equation*}for all $x_1,x_2\in M$ and  $ 0<t_1<t_2<\infty.$
\end{cor}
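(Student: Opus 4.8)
The plan is to deduce the Harnack inequality from Theorem \ref{thm1.4} by the classical Li--Yau integration of the differential Harnack inequality along a space-time curve, the only genuinely new point being the control of the non-smoothness. First I would reformulate \eqref{eq1.1}. For each $t>0$ the function $u(\cdot,t)$ is Lipschitz and, $M$ being compact and $u$ positive continuous, bounded below by a positive constant (this is exactly the regularity exploited in Lemma \ref{lem5.1}, via \eqref{eq5.1}); hence $\log u(\cdot,t)$ is Lipschitz and, since $\partial_tu=\Delta u$, the chain rule $\mathscr L_{\Phi(u)}=\Phi'(u)\mathscr L_u+\Phi''(u)|\nabla u|^2\rv$ with $\Phi=\log$ gives $\partial_t\log u=\Delta u/u=\Delta(\log u)+|\nabla\log u|^2$, so \eqref{eq1.1} is precisely
$$\partial_t\log u\ \gs\ |\nabla\log u|^2-\frac{n}{2t},\qquad t>0 .$$
Now fix $0<t_1<t_2$ and $x_1,x_2\in M$, let $\gamma\colon[t_1,t_2]\to M$ be a constant-speed minimizing geodesic with $\gamma(t_1)=x_1$, $\gamma(t_2)=x_2$, so $|\gamma'(t)|\equiv L:=|x_1x_2|/(t_2-t_1)$, and set $\phi(t):=\log u(\gamma(t),t)$. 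Since $t\mapsto u(x,t)$ is differentiable for fixed $x$ (again from \eqref{eq5.1} and analyticity of the semigroup) and $u(\cdot,t)$ is Lipschitz uniformly in $t$ on compact time intervals, $\phi$ is Lipschitz on $[t_1,t_2]$, hence absolutely continuous.

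At almost every $t\in[t_1,t_2]$ I would estimate
$$\phi'(t)\ \gs\ \ip{\nabla\log u(\gamma(t),t)}{\gamma'(t)}+\partial_t\log u(\gamma(t),t)\ \gs\ |\nabla\log u|^2-L\,|\nabla\log u|-\frac{n}{2t}\ \gs\ -\frac{L^2}{4}-\frac{n}{2t},$$
where the first step is the chain rule for $\phi$ together with Cauchy--Schwarz, the second is the differential Harnack inequality above, and the third is completing the square in $|\nabla\log u|$. Integrating over $[t_1,t_2]$ and using $L^2(t_2-t_1)=|x_1x_2|^2/(t_2-t_1)$ yields
$$\log u(x_2,t_2)-\log u(x_1,t_1)=\int_{t_1}^{t_2}\phi'(t)\,dt\ \gs\ -\frac{|x_1x_2|^2}{4(t_2-t_1)}-\frac{n}{2}\log\frac{t_2}{t_1},$$
which is the claimed inequality after exponentiation. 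The remaining manipulations are verbatim those of \cite{ly86}.

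The main obstacle is the first inequality in the displayed chain, namely the bound $\phi'(t)\gs -|\nabla\log u|(\gamma(t),t)\,|\gamma'(t)|+\partial_t\log u(\gamma(t),t)$ valid for a.e. $t$: the push-forward of Lebesgue measure on $[t_1,t_2]$ under $\gamma$ is in general singular with respect to $\rv$, so the $\rv$-a.e. pointwise form of \eqref{eq1.1} (with $|\nabla\log u|$ its lower semicontinuous representative) does not automatically hold at the points $\gamma(t)$, and the Alexandrov chain rule along a single geodesic is not immediate. To handle this I would invoke, as in the proof of Theorem \ref{thm1.2}, the Kuwae--Machigashira--Shioya identity $d_{\mathscr E}=d$ from \cite{kms01}: for Lipschitz $h$ this gives $|h(\gamma(b))-h(\gamma(a))|\ls\int_a^b(\operatorname{Lip}h)(\gamma(s))\,|\gamma'(s)|\,ds$, where $\operatorname{Lip}h$ is the pointwise upper Lipschitz constant, together with $\operatorname{Lip}h=|\nabla h|$ $\rv$-a.e.; then, replacing $\gamma$ by the family of minimizing geodesics joining points near $x_1$ to points near $x_2$, which sweep out a set of positive $\rv$-measure, a Fubini argument transfers the pointwise estimate \eqref{eq1.1} to hold along almost every geodesic of this family, and the conclusion for $(x_1,t_1),(x_2,t_2)$ follows by letting the endpoints tend to $x_1,x_2$ and using the continuity of $u$ on $M\times(0,\infty)$. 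Everything else is the routine Li--Yau computation.
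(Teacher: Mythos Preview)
Your approach is exactly the one the paper intends: the paper's entire proof of Corollary~\ref{cor5.4} is the single sentence ``Since $u(x,t)$ is continuous in $M\times(0,\infty)$, a direct application of Theorem~\ref{thm1.4}, as in \cite{ly86}, gives the following parabolic Harnack inequality,'' and your proposal spells out precisely that classical Li--Yau integration along a geodesic, with the extra care (the Fubini/continuity argument for transferring the $\rv$-a.e.\ estimate to a single geodesic) that the paper leaves implicit.
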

\noindent The Harnack inequality is sharper than that in \cite{s96}.


\begin{thebibliography}{99}

\bibitem{ac12} B. Andrews, J. Clutterbuck, \emph{Sharp modulus of continuity for parabolic equations on manifolds and lower bounds for the first eigenvalue}, http://arxiv.org/abs/1204.5079

\bibitem{b04} G. Besson, \emph{From isoperimetric inequalities to heat kernels via symmetrisation},   Surveys in Differential Geometry IX: Eigenvalues of Laplacians and other geometric operators, Edited by A. Grigor'yan and S. T. Yau, (2004), pp. 27--51, International Press, Somerville, MA.

\bibitem{bbi01} D. Burago,  Y. Burago, S. Ivanov, \emph{A Course in Metric Geometry}, Graduate Studies in Mathematics, vol. 33, AMS (2001).

\bibitem{bgp92} Y. Burago, M. Gromov, G. Perelman, \emph{A. D. Alexandrov spaces with curvatures
bounded below}, Russian Math. Surveys 47 (1992), 1--58.

\bibitem{bl06} D. Bakry, M. Ledoux, \emph{A logarithmic Sobolev form of the Li-Yau parabolic inequality}, Rev. Mat. Iberoamericana, 22(2) (2006),
    683--702.

\bibitem{bq00} D. Bakry, Z. Qian, \emph{Some new results on eigenvectors via diemnsion, diameter, and Ricci curvature}, Adv. in Math., 155 (2000), 98--153.

\bibitem{c70} J. Cheeger, \emph{A lower bound for the smallest eigenvalue of the Laplacian, Problems in analysis,} Symposium in Honor of S. Bochner, pp. 195--199, Princeton Univ. Press, Princeton, NJ, 1970.

\bibitem{c99} J. Cheeger, \emph{Differentiability of Lipschitz functions on metric measure spaces}, Geom. Funct. Anal. 9,  (1999), 428--517.

\bibitem{cw94} M. F. Chen, F. Y. Wang, \emph{Application of coupling method to the first eigenvalue on manifold}, Sci. Sin. (A), 37(1994)
1--14.

\bibitem{cw97} M. F. Chen, F. Y.  Wang, \emph{General formula for lower bound of the first eigenvalue on Riemannian manifolds}, Sci.
Sin. (A), 40(1997) 384--394.


\bibitem{en99} H-J, Engel, R. Nagel  \emph{One-parameter semigroups for linear evolution equations}, in ``GTM'' 194, Springer,  1999.

\bibitem{gko10} N. Gigli, K. Kuwada, S. Ohta, \emph{Heat flow on Alexandrov spaces}, to appear in Comm. Pure Appl. Math. (see also http://arxiv.org/abs/1008.1319).


\bibitem{hw07} F. Hang, X. Wang, \emph{A Remark on Zhong-Yang's eigenvalue estimate},   Int Math Res Notices (2007) Vol. 2007 rnm064 doi:10.1093/imrn/rnm064


\bibitem{km02} J. Kinnunen, O. Martio, \emph{Nonlinear potential theory on metric spaces}, Illinois J. Math., 46(3) (2002), 857-883.

\bibitem{k92} P. Kr\"oger, \emph{On the spectral gap for compact manifolds}, J. Diff. Geom. 36 (1992), 315--330.

\bibitem{kms01} K. Kuwae, Y. Machigashira, T. Shioya, \emph{Sobolev spaces, Laplacian and
heat kernel on Alexandrov spaces}, Math. Z. 238(2) (2001), 269--316.


\bibitem{ks07} K. Kuwae, T. Shioya, \emph{A topological splitting theorem for weighted Alexandrov spaces}, Tohoku Math. J. (2) 63 (2011), no. 1, 59-76.



\bibitem{l04} M. Ledoux, \emph{Spectral gap, logarithmic Sobolev constant, and geometric bounds},   Surveys in Differential Geometry IX: Eigenvalues of Laplacians and other geometric operators, Edited by A. Grigor'yan and S. T. Yau, (2004), pp. 219--240, International Press, Somerville, MA.

\bibitem{ling07} J. Ling, \emph{Lower bounds of the eigenvalues of compact manifolds with positive Ricci curvature}, Ann. Glob. Anal. Geom.  31(2007), 385--408.

\bibitem{ly79} P. Li, S. T. Yau, \emph{Eigenvalues of a compact Riemannian manifold}, AMS Symposium on Geometry of the Laplace Operator,  XXXVI. Hawaii, pp. 205--240, 1979.

\bibitem{l58} A. Lichnerowicz, \emph{G\^{e}ometries des Groupes des Transformations,}  Paris, Dunod, 1958.

\bibitem{ly86} P. Li, S. T. Yau, \emph{On the parabolic kernel of the Schr\"odinger operator.} Acta Math. 156 (1986), 153--201.


\bibitem{lv07} J. Lott, C. Villani,  \emph{Weak curvature bounds and functional inequalities}, J. Funct. Anal. 245(1) (2007), 311--333.



\bibitem{ob62} M. Obata, \emph{Certain conditions for a Riemannian manifold to be a sphere}, J. Math. Soc. Japan. 14 (1962), 333--340.


\bibitem{o07} S. Ohta,  \emph{On measure contraction property of metric measure spaces}, Comment. Math. Helvetici, 82(4)  (2007),  805--828.






\bibitem{petu09}  A. Petrunin, \emph{Alexandrov meets Lott--Villani--Sturm},  available  at http://cn.arxiv.org/abs/1003.5948v1



\bibitem{petu03}  A. Petrunin, \emph{Harmonic functions on Alexandrov space and its
applications,}
ERA Amer. Math. Soc., 9 (2003), 135--141.



\bibitem{S} T. Shioya,  \emph{Eigenvalues and suspension structure of compact Riemannian orbifolds with positive Ricci curvature}, Manuscripta Math., 99(4) (1999), 509--516.

\bibitem{s96} K. Sturm,  \emph{Analysis on local Dirichlet spaces. III. The parabolic Harnack inequality}, J. Math. Pure. Appl., 75(3) (1996), 273--297 .

\bibitem{s06} K. Sturm,  \emph{On the geometry of metric measure spaces. II}. Acta Math.  196(1) (2006), 133--177.

\bibitem{sy94} R. Schoen, S. T. Yau, \emph{Lectures on Differential Geometry},
International Press, Boston, 1994.

\bibitem{wx11} G. Wang, C. Xia, \emph{A sharp lower bound for the first eigenvalue on Finsler manifolds}, available at http://arxiv.org/abs/1112.4401.

\bibitem{zz10-1} H. C. Zhang, X. P.  Zhu, \emph{Ricci curvature on Alexandrov spaces and rigidity theorems}, Comm. Anal. Geom. 18(3) (2010), 503--554.
\bibitem{zz10-2} H. C. Zhang, X. P.  Zhu, \emph{On a new definition of Ricci curvature on Alexandrov spaces}, Acta Mathematica Scientia. 30B(6) (2010), 1949--1974.

\bibitem{zz10-3} H. C. Zhang, X. P.  Zhu, \emph{Yau's gradient estimates on Alexandrov spaces}, to appear in J. Differential Geom. (see also http://arxiv.org/pdf/1012.4233v3).

\bibitem{zy84} J. Q. Zhong, H. C. Yang, \emph{On the estimate of the first eigenvalue of a compact Riemannian manifold,} Sci. Sinica Ser. A 27(12) (1984), 1265--1273.


\end{thebibliography}
\end{document}